\newtheorem{theo}{Theorem}
\newtheorem{prop}[theo]{Proposition}
\newtheorem{lemm}[theo]{Lemma}
\newtheorem{coro}[theo]{Corollary}
\newtheorem{rema}[theo]{Remark}
\newtheorem{conj}[theo]{Conjecture}
\date{}
\title{Segre classes of tautological bundles on Hilbert schemes of surfaces}
\author{Claire Voisin\footnote{Coll\`ege de France  and ETH-ITS}}
\begin{document}
\maketitle
\begin{abstract} We first give an alternative proof,  based on a simple geometric argument, of a result of Marian, Oprea and Pandharipande on top Segre classes of the tautological bundles  on Hilbert schemes of $K3$ surfaces equipped with
 a line bundle. We then turn to the blow-up of $K3$ surface at one point and establish vanishing results for the corresponding  top Segre classes in  a certain range. This  determines, at least theoretically, all top Segre classes of tautological bundles for any pair $(\Sigma,H),\,H\in {\rm Pic}\,\Sigma$.
\end{abstract}

{\bf Classification}. {14N10 (primary),  14J99 (secondary). }

{\bf Keywords:} {Punctual Hilbert scheme, Segre classes, tautological bundles.}
\section{Introduction\label{secintro}}
Let $S$ be  a smooth projective (or compact complex)  surface. The Hilbert scheme $S^{[k]}$ is smooth projective (or compact complex)
of dimension $2k$. For any line bundle
$H$ on $S$, we get an associated vector bundle $\mathcal{H}_{[k]}$ on $S^{[k]}$, whose fiber
at a point $[Z]\in S^{[k]}$ is the vector space $H^0(H_{\mid Z})$. If $S$ is a
$K3$ surface and $c_1(H)^2=2g-2$, we denote
$$s_{k,g}:=\int_{S^{[k]}}s_{2k}(\mathcal{H}_{[k]}).$$
This is indeed a number which depends only on $k$ and $g$ (see Theorem \ref{theoegl}).
The following result is proved in \cite{MOP}:
\begin{theo}\label{theopandha} One has $ s_{k,g}=2^k\binom{g-2k+1}{k}$.
\end{theo}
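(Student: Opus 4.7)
The strategy is to realise $s_{k,g}$ as the top-dimensional Chern class of a kernel bundle whose rank depends on $g$, and then to use polynomial interpolation combined with an enumerative computation. By Theorem~\ref{theoegl}, $s_{k,g}$ depends only on $g$, and a standard weight count shows it is in fact a polynomial in $g$ of degree at most $k$. I fix a polarised K3 surface $(S,H)$ with $\mathrm{Pic}(S)=\mathbb{Z}H$ and $c_1(H)^2=2g-2$, assuming $H$ is $(k-1)$-very ample; by standard K3 $k$-very-ampleness criteria this holds as soon as $g\geq 2k-1$. Then the evaluation map
$$e\colon V\otimes\mathcal{O}_{S^{[k]}}\twoheadrightarrow\mathcal{H}_{[k]},\qquad V:=H^0(S,H),$$
is surjective, and its kernel $K:=\ker e$ is a vector bundle of rank $g+1-k$. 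The exact sequence gives $c(K)=s(\mathcal{H}_{[k]})$, and hence
$$s_{k,g}=\int_{S^{[k]}}c_{2k}(K).$$

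This reformulation produces $k$ vanishings for free: whenever $g\in\{2k-1,\ldots,3k-2\}$ one has $\mathrm{rank}(K)=g+1-k<2k=\dim S^{[k]}$, so $c_{2k}(K)=0$. These are precisely the $k$ zeros of the claimed formula $2^k\binom{g-2k+1}{k}$. Only one further value is needed, and the natural choice is $g=3k-1$, where $K$ has rank exactly $2k$ and $c_{2k}(K)=c_{\mathrm{top}}(K)=c_{2k}(K^*)$ is a genuine Euler class. I would compute this using the dual surjection $V^*\otimes\mathcal{O}\twoheadrightarrow K^*$, representing $c_{2k}(K^*)$ by the zero locus of a generic section of $K^*$ coming from some $\sigma\in V^*$. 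Unwinding, such a section vanishes at $[Z]\in S^{[k]}$ iff $\sigma$ lies in the image of the inclusion $\mathcal{H}_{[k],[Z]}^*\hookrightarrow V^*$. Identifying $S$ with its image in $\mathbb{P}(V^*)=\mathbb{P}^{3k-1}$ under $\varphi_{|H|}$, this condition rewrites as $[\sigma]\in\langle\varphi_{|H|}(Z)\rangle$. Consequently $s_{k,3k-1}$ equals the degree of the secant map $\mathcal{P}\to\mathbb{P}^{3k-1}$, where $\mathcal{P}\to S^{[k]}$ is the $\mathbb{P}^{k-1}$-bundle of subscheme spans: geometrically, it counts the length-$k$ subschemes of $S$ whose projective span contains a generic point of $\mathbb{P}^{3k-1}$.

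The remaining task, which I expect to be the main obstacle, is to show that this secant degree is exactly $2^k$. For $k=1$ it is transparent, since $g=2$ and $\varphi_{|H|}\colon S\to\mathbb{P}^2$ is a double cover, giving $2=2^1$. For general $k$ I would try either a degeneration to a convenient K3 (for instance an elliptic fibration, where $k$-secants can be counted by distributing points across fibres in a controlled way) or a direct Chern-class computation on $\mathcal{P}$. Once $s_{k,3k-1}=2^k$ is established, the polynomial $s_{k,g}$ of degree at most $k$, with its $k$ known zeros together with the value $2^k$ at $g=3k-1$, is uniquely determined by interpolation, and hence equals $2^k\binom{g-2k+1}{k}$.
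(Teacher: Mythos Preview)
Your vanishing argument for $g\in\{2k-1,\ldots,3k-2\}$ is correct and is essentially the paper's Proposition~\ref{provan}: both rest on Lazarsfeld's lemma (Lemma~\ref{lelaz}) that $\mathcal{H}_{[k]}$ is globally generated once $g>2k-2$, and your kernel-bundle formulation $s_{2k}(\mathcal{H}_{[k]})=c_{2k}(K)$ with ${\rm rank}\,K=g+1-k<2k$ is just the dual of the paper's observation that the morphism $\mathbf{P}(\mathcal{H}_{[k]}^*)\to\mathbf{P}^g$ lands in a target of smaller dimension.

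The genuine gap is in your endgame. Having obtained the $k$ zeros, you propose to pin down the degree-$\leq k$ polynomial $s_{k,g}$ by computing the single value $s_{k,3k-1}=2^k$ as a secant degree. Your secant interpretation is correct, but you do not actually carry out the computation, and it is not obviously easier than the theorem itself: neither of your suggested routes (degeneration to an elliptic $K3$, or a direct Chern-class calculation on the secant bundle) is developed beyond a sentence. The paper sidesteps this difficulty entirely. It uses only the \emph{two} vanishings $s_{k,2k}=s_{k,2k-1}=0$ and couples them to a recursion in $g$, namely $s_{k,g}=\sum_l b_l\,s_{k-l,g-1}$ (equation~(\ref{eqrec})), obtained by replacing the $K3$ of genus $g$ with the disjoint union of a $K3$ of genus $g-1$ and a principally polarised abelian surface. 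This recursion together with the trivial initial data $b_0=1$, $b_1=2$, $s_{0,g}=1$, $s_{1,g}=2g-2$ determines both the $b_l$ and the $s_{k,g}$ inductively (Corollary~\ref{corounique}), and one then checks that $2^k\binom{g-2k+1}{k}$ satisfies the same recursion (Lemma~\ref{lecombelow}). So the paper trades your missing enumerative input for a purely combinatorial induction, and that is why its argument closes and yours, as written, does not. Your polynomiality claim also deserves a word of justification; it does follow from the multiplicative structure of Theorem~\ref{theoegl} and Lemma~\ref{theosumprod}, but ``a standard weight count'' is not quite a proof.
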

Here  the binomial coefficient is defined for $k\geq 0$. It is always $1$ for $k=0$ and
the formula for $\binom{n}{k}$ for any $n$ is
$$ \binom{n}{k}=\frac{n(n-1)\dots (n-k+1)}{k!}.$$
In particular, we have
$\binom{n}{k}=0$ if $n\geq 0$ and $n<k$. The theorem above thus
gives in particular the vanishing
\begin{eqnarray} \label{eqvanintrolaz} s_{k,g}=0 \,\,{\rm when}\,\,
g-2k+1\geq 0\,\,{ \rm and }\,\,k>g-2k+1.
\label{eqvan}
\end{eqnarray}
 The proof of this vanishing statement in \cite{MOP} is rather involved and we
are going to give in Section \ref{secgeovan} a direct geometric proof of (\ref{eqvanintrolaz}),  based on a small improvement of Lazarsfeld's arguments in \cite{laz}.

We will then show how the vanishing (\ref{eqvan}), even only in the smaller range $g=2k-1,\,g=2k$,
implies Theorem \ref{theopandha}. We simply use for this the following result which is due to Tikhomirov
\cite{tikho} (see also  Ellingsrud-G\"ottsche-Lehn \cite{egl} and Lehn
\cite{lehn} for related statements) :
\begin{theo} \label{theoegl} The Segre numbers
$\int_{S^{[k]}}s_{2k}(H_{[k]})$ for a projective surface $S$ equipped with a line bundle
$H$ depend only on the four numbers
$$\pi=H\cdot  K_S,\,d=H^2,\,\kappa=K_S^2,\,e=c_2(S).$$
\end{theo}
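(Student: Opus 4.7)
My plan is to prove Theorem \ref{theoegl} via Grothendieck--Riemann--Roch on the universal subscheme and a reduction of intersection numbers on $S^{[k]}$ to intersection numbers on powers of $S$. Let $\mathcal{Z}_k \subset S^{[k]} \times S$ be the universal length-$k$ subscheme, with projections $q : \mathcal{Z}_k \to S^{[k]}$ (finite flat of degree $k$) and $p : \mathcal{Z}_k \to S$, so that $H_{[k]} = q_* p^* H$. Applying Grothendieck--Riemann--Roch to $q$ gives
\[ \mathrm{ch}(H_{[k]}) = q_*\bigl(p^* \mathrm{ch}(H) \cdot \mathrm{td}(T_q)\bigr). \]
Since $H$ is a line bundle on a surface, $\mathrm{ch}(H) = 1 + c_1(H) + \tfrac{1}{2} c_1(H)^2$; therefore each component $\mathrm{ch}_i(H_{[k]})$ is a $\mathbb{Q}$-linear combination of pushforwards $q_*(p^* c_1(H)^j \cdot A_j)$ with $j \in \{0,1,2\}$, for classes $A_j$ on $\mathcal{Z}_k$ built from $\mathrm{td}(T_q)$ (and hence independent of $H$). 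The top Segre class $s_{2k}(H_{[k]})$ is a universal polynomial in the Chern characters of $H_{[k]}$, hence in such pushforwards.

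Next, using that $q$ is finite flat, flat base change and the projection formula unfold a product of pushforwards as a single pushforward from an iterated fibre product:
\[ q_*(B_1) \cdots q_*(B_r) = q^{(r)}_*\bigl(\mathrm{pr}_1^* B_1 \cdots \mathrm{pr}_r^* B_r\bigr), \]
with $q^{(r)} : \mathcal{Z}_k^{(r)} := \mathcal{Z}_k \times_{S^{[k]}} \cdots \times_{S^{[k]}} \mathcal{Z}_k \to S^{[k]}$. Composing with the natural map $p^{(r)} : \mathcal{Z}_k^{(r)} \to S^r$, the integral $\int_{S^{[k]}} s_{2k}(H_{[k]})$ becomes a sum of intersection numbers on $S^r$ of products $\prod_j \mathrm{pr}_j^* c_1(H)^{a_j}$ against universal classes pushed down from $\mathcal{Z}_k^{(r)}$ depending only on $T_S$. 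By K\"unneth, each such number factors as a product of degree-$2$ intersection numbers on single copies of $S$ involving $c_1(H)$ and Chern classes of $T_S$, and these are combinations of $d$, $\pi$, $\kappa$ and $e$.

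The main obstacle is the pushforward $(p^{(r)})_*$ from $\mathcal{Z}_k^{(r)}$ to $S^r$: the iterated fibre products are singular (they stratify according to the way points of the length-$k$ subscheme collide), so the universal classes on $\mathcal{Z}_k^{(r)}$ that one wants to push down have to be controlled in refined intersection theory, for example via a resolution through nested Hilbert schemes in the spirit of Lehn and Tikhomirov. Once the resulting pushforwards on $S^r$ are identified as polynomial expressions in pullbacks of $c_i(T_S)$, the conclusion that $\int_{S^{[k]}} s_{2k}(H_{[k]})$ depends only on $\pi, d, \kappa, e$ follows by K\"unneth and a dimension count on each factor $S$.
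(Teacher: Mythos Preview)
The paper does not prove Theorem~\ref{theoegl}; it is quoted from Tikhomirov \cite{tikho} (with related statements in \cite{egl}, \cite{lehn}) and used as input. So there is no ``paper's own proof'' to compare with, and your proposal should be read as an attempt to reconstruct the argument of those references.

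Your outline is in the right spirit: GRR for $q:\mathcal{Z}_k\to S^{[k]}$ expresses $\mathrm{ch}(H_{[k]})$ via pushforwards of classes of the form $p^*c_1(H)^j\cdot(\text{universal Todd data})$, and unfolding a product of such pushforwards on an iterated fibre product $\mathcal{Z}_k^{(r)}$ is a standard move. However, you yourself flag the decisive step and then do not carry it out. The sentence beginning ``Once the resulting pushforwards on $S^r$ are identified as polynomial expressions in pullbacks of $c_i(T_S)$\ldots'' is exactly what has to be \emph{proved}: that the classes $(p^{(r)})_*(\text{universal data on }\mathcal{Z}_k^{(r)})$ on $S^r$ are given by \emph{universal} polynomials in $\mathrm{pr}_j^*c_1(T_S),\,\mathrm{pr}_j^*c_2(T_S)$ and diagonal classes, with coefficients independent of $S$. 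Since $\mathcal{Z}_k^{(r)}$ is singular and the Todd data of $T_q$ does not obviously descend to something tautological on $S^r$, this is not a formality, and your reference to ``refined intersection theory, for example via a resolution through nested Hilbert schemes'' is a pointer to the literature rather than an argument.

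In the actual proofs this obstacle is circumvented rather than attacked head-on. Ellingsrud--G\"ottsche--Lehn \cite{egl} use the incidence variety $S^{[k,k+1]}\subset S^{[k]}\times S^{[k+1]}$, which is smooth and maps to $S^{[k]}\times S$, to set up an induction on $k$ that keeps all spaces smooth; alternatively, their cobordism argument shows directly that the complex cobordism class of $(S^{[k]},H_{[k]})$ depends only on that of $(S,H)$, hence on $d,\pi,\kappa,e$. Either route replaces your singular $\mathcal{Z}_k^{(r)}$ by a chain of smooth correspondences. As written, your proposal is a plausible sketch of the strategy but leaves the essential technical step unproved.
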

We will denote these Segre numbers $s_{k,d,\pi,\kappa,e}$.
It follows from Theorem \ref{theoegl} that the numbers
$s_{k,g}$ can be computed as well by considering a surface
$\Sigma$ which is the disjoint union of a $K3$ surface  $S'$, equiped with a line bundle $H'$
of self intersection $2(g-1)-2$, and an abelian surface $A$ equiped with a line bundle
$\theta$ with $\theta^2=2$. We will show in Section
\ref{sec2} that the formula obtained by this observation (this is a particular case
 of (\ref{eqsumprod}) below), combined with the vanishing result
  (\ref{eqvan}), uniquely determine the numbers $b_k:=\int_{A^{[k]}}s_{2k}(\theta_{[2k]})$ and finally the numbers
$s_{k,g}$ for all $k,\,g$, knowing that $s_{1,g}=2g-2$, $b_0=1,\,b_1=2$.

In Section \ref{secgeovan}, we will establish similar vanishing results
for a $K3$ surface $S$
blown-up at one point. Let $\widetilde{S}$ be such a surface
and let $H=\tau^*L(-lE)$ with  $2g-2=L^2$, where $L$ generates ${\rm Pic}\,S$.
\begin{theo}  \label{theoannuintro}
For $k\geq 2$, one has the following vanishing for the Segre numbers $\tilde{s}_{k,g,l}:=\int_{\widetilde{S}^{[k]}}s_{2k}(\mathcal{H}_{[k]})$:
\begin{eqnarray}\label{eqvanconjintro}\tilde{s}_{k,g,l}=0\,\,{\rm for}\,\,k=l,\,l+1\,\,{\rm and}\,\,g-\frac{l(l+1)}{2}=3k-2.
\end{eqnarray}

\end{theo}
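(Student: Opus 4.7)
The plan is to carry out on $\widetilde{S}$ the same global-generation argument used in Section \ref{secgeovan} for the $K3$ case. One aims for an exact sequence of vector bundles
\[
0 \longrightarrow K \longrightarrow H^0(\widetilde{S}, H) \otimes \mathcal{O}_{\widetilde{S}^{[k]}} \stackrel{\mathrm{ev}}{\longrightarrow} \mathcal{H}_{[k]} \longrightarrow 0
\]
with $\mathrm{rk}\,K = h^0(\widetilde{S}, H) - k$. Once this is in hand, $s(\mathcal{H}_{[k]}) = c(K)$, so as soon as $\mathrm{rk}\,K < 2k = \dim \widetilde{S}^{[k]}$, the top Segre class $s_{2k}(\mathcal{H}_{[k]}) = c_{2k}(K)$ vanishes for rank reasons, and hence $\tilde s_{k,g,l} = 0$.

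The first input is the computation of $h^0(\widetilde{S}, H)$. By the projection formula, $h^0(\widetilde{S}, \tau^*L(-lE)) = h^0(S, L \otimes \mathfrak{m}_p^l)$. For a generic $K3$ surface $S$ with $\mathrm{Pic}\, S = \mathbb{Z}\cdot L$ and a generic point $p \in S$, the $l(l+1)/2$ linear conditions of vanishing to order $l$ at $p$ are independent on $|L|$, and the hypothesis $g - l(l+1)/2 = 3k - 2$ then yields $h^0(\widetilde{S}, H) = g + 1 - l(l+1)/2 = 3k - 1$; hence $\mathrm{rk}\,K = 2k - 1 < 2k$, as needed.

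The main obstacle is the surjectivity of $\mathrm{ev}$, i.e.\ the statement that $H^0(\widetilde{S}, H) \to H^0(Z, H|_Z)$ is surjective for every length-$k$ subscheme $Z \subset \widetilde{S}$. The restriction to $k \in \{l, l+1\}$ is forced here: if $Z$ is a length-$k$ subscheme supported on $E \cong \mathbb{P}^1$ as a subscheme of $E$, then the restriction factors through $H^0(E, H|_E) = H^0(\mathbb{P}^1, \mathcal{O}(l))$, of dimension only $l + 1$, making surjectivity impossible unless $k \leq l + 1$. My strategy is to stratify $\widetilde{S}^{[k]}$ according to the scheme $Z \cap E$ (including its infinitesimal thickening transverse to $E$) and, on each stratum, reduce the required vanishing $H^1(\widetilde{S}, H \otimes \mathcal{I}_Z) = 0$, via the projection formula, to a statement of the form $H^1(S, L \otimes \mathfrak{m}_p^a \otimes \mathcal{I}_{Z'}) = 0$ with $a \leq l$ and $Z' \subset S$ a subscheme whose length depends on the stratum. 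Each such vanishing amounts to the assertion that $Z'$ together with the $a$-th infinitesimal neighborhood of $p$ imposes independent conditions on $|L|$, which for generic $(S, p)$ follows from the dimension count supplied by $g - l(l+1)/2 = 3k - 2$ and $k \leq l + 1$. The hardest part is the case analysis for subschemes $Z$ meeting $E$ in thickened or curvilinear fashion; it is precisely these configurations that pin the allowed range down to the two boundary cases $k = l,\, l+1$.
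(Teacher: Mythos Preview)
Your overall framework matches the paper's: compute $h^0(\widetilde S,H)=3k-1$, show that the evaluation map $H^0(\widetilde S,H)\otimes\mathcal O\to\mathcal H_{[k]}$ is surjective, and conclude $s_{2k}(\mathcal H_{[k]})=c_{2k}(K)=0$ since $\mathrm{rk}\,K=2k-1<2k$. This is exactly Lemma~\ref{lefact}, and the paper proceeds the same way via Theorem~\ref{theoexconj} and Remark~\ref{rematardive}.

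The gap is in your proposed proof of surjectivity. You write that after reducing to $H^1(S,L\otimes\mathfrak m_p^a\otimes\mathcal I_{Z'})=0$, ``for generic $(S,p)$ this follows from the dimension count.'' But the subscheme $Z'$ is \emph{arbitrary}, not generic: you must show the vanishing for \emph{every} $Z'$ of the relevant length, not just a general one. A dimension count only tells you that a generic subscheme imposes independent conditions; the whole content of Lazarsfeld's theorem is that on a Picard-rank-one $K3$ this holds for \emph{all} subschemes in the right range. Worse, even invoking Lemma~\ref{lelaz} on $S$ does not save the argument: when $Z$ is disjoint from $E$ you would need $L$ to be $(k+\tfrac{l(l+1)}{2})$-ample, i.e.\ $g>2k+l(l+1)-2$, whereas the hypothesis gives only $g=3k-2+\tfrac{l(l+1)}{2}$; for $k=l$ or $l+1$ with $l\ge 2$ this inequality fails. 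So the stratify-and-push-down strategy does not close, and you acknowledge that the thickened/curvilinear cases along $E$ are left as ``the hardest part.''

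The paper avoids all of this by running the Lazarsfeld--Reider vector bundle argument \emph{directly on} $\widetilde S$ (Theorem~\ref{theoexconj}): a failure of $k$-ampleness produces a rank-two torsion-free sheaf $\mathcal E$ on $\widetilde S$ via an extension class in $\mathrm{Ext}^1(\mathcal I_Z,\mathcal O_{\widetilde S}(E-H))$; the numerics $g-\tfrac{l(l+1)}{2}=3k-2$ with $k\in\{l,l+1\}$ force $h^0(\mathrm{End}\,\mathcal E(E))>1$, and analysing a rank-one endomorphism against $\mathrm{Pic}\,\widetilde S=\mathbb Z\tau^*L\oplus\mathbb Z E$ yields a contradiction via $c_2(\mathcal E)$. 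The same method, applied first without $Z$ (Proposition~\ref{propourEsansZ}), also gives $H^1(\widetilde S,H)=0$ and hence $h^0=3k-1$ without appealing to genericity of the point.
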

We will also prove that these vanishing statements  together with Theorem
\ref{theopandha} determine all Segre numbers $s_k(d,\pi,\kappa,e)$.
We use for this the following complement to  Theorem
\ref{theoegl}, (see \cite{lehn}, \cite{egl},) obtained by observing that
the Hilbert scheme
$S^{[k]}$ of a disjoint union $S_1\sqcup S_2$ is the disjoint union
for $l=0,\ldots,k$, of $S_1^{[l]}\times S_2^{[k-l]}$, while all the data
$d,\,\pi,\,\kappa,\,c_2$ for the pairs $(\Sigma,H)$ are additive under disjoint unions
$(S,L)=(S_1,L_1)\sqcup (S_2,L_2)$:
\begin{lemm}\label{theosumprod} With the notation
$s_{d,\pi,\kappa,e}(z)=\sum_k s_{k,d,\pi,\kappa,e}z^k$
\begin{eqnarray}
\label{eqsumprod} s_{d,\pi,\kappa,e}(z)=s_{d_1,\pi_1,\kappa_1,e_1}(z)s_{d_2,\pi_2,\kappa_2,e_2}(z)
\end{eqnarray}
with $d=d_1+d_2,\,\pi=\pi_1+\pi_2$ etc.
\end{lemm}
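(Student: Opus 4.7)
The plan is to unpack the geometric decomposition of $S^{[k]}$ for $S=S_1\sqcup S_2$ and reduce the statement to the multiplicativity of total Segre classes under direct sum.

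First I would note that any length-$k$ subscheme $Z\subset S_1\sqcup S_2$ splits uniquely as $Z_1\sqcup Z_2$ with $Z_i\subset S_i$ of length $l_i$, $l_1+l_2=k$. This gives the decomposition into connected components
\[
S^{[k]}=\bigsqcup_{l=0}^{k}S_1^{[l]}\times S_2^{[k-l]}.
\]
Next, for the line bundle $L=L_1\sqcup L_2$ on $S$, I observe that on the fiber level $H^0(L_{\mid Z})=H^0(L_{1\mid Z_1})\oplus H^0(L_{2\mid Z_2})$, so on each component $S_1^{[l]}\times S_2^{[k-l]}$ the tautological bundle restricts as
\[
\mathcal{L}_{[k]\mid S_1^{[l]}\times S_2^{[k-l]}}=p_1^*\mathcal{L}_{1,[l]}\oplus p_2^*\mathcal{L}_{2,[k-l]},
\]
where $p_i$ are the projections.

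Then I apply the Whitney formula for total Segre classes: $s(E\oplus F)=s(E)s(F)$ (which follows from the analogous identity for total Chern classes by inverting). Hence on $S_1^{[l]}\times S_2^{[k-l]}$,
\[
s(\mathcal{L}_{[k]})=p_1^*s(\mathcal{L}_{1,[l]})\cdot p_2^*s(\mathcal{L}_{2,[k-l]}).
\]
Taking the degree-$2k$ part and integrating over the $2k$-dimensional product, only the pure K\"unneth bidegree $(2l,2(k-l))$ contributes, giving
\[
\int_{S_1^{[l]}\times S_2^{[k-l]}}s_{2k}(\mathcal{L}_{[k]})=\Bigl(\int_{S_1^{[l]}}s_{2l}(\mathcal{L}_{1,[l]})\Bigr)\Bigl(\int_{S_2^{[k-l]}}s_{2(k-l)}(\mathcal{L}_{2,[k-l]})\Bigr).
\]

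Finally, summing over the components $l=0,\ldots,k$ and invoking the additivity of $d,\pi,\kappa,e$ under disjoint union (so that the invariants of $(S_i,L_i)$ determine via Theorem \ref{theoegl} the factors above as $s_{l,d_i,\pi_i,\kappa_i,e_i}$), I get
\[
s_{k,d,\pi,\kappa,e}=\sum_{l=0}^{k}s_{l,d_1,\pi_1,\kappa_1,e_1}\cdot s_{k-l,d_2,\pi_2,\kappa_2,e_2},
\]
which is precisely the coefficient of $z^k$ in the product $s_{d_1,\pi_1,\kappa_1,e_1}(z)\cdot s_{d_2,\pi_2,\kappa_2,e_2}(z)$, proving (\ref{eqsumprod}). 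There is no real obstacle here: the only step requiring a moment's thought is checking that the tautological bundle genuinely splits as a direct sum on each product component, which follows from the elementary sheaf-theoretic identity $L_{\mid Z_1\sqcup Z_2}=L_{1\mid Z_1}\oplus L_{2\mid Z_2}$.
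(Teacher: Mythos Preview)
Your argument is correct and is exactly the approach the paper indicates: the paper merely sketches the proof by noting the decomposition $S^{[k]}=\bigsqcup_l S_1^{[l]}\times S_2^{[k-l]}$ and the additivity of the invariants under disjoint union, and you have faithfully filled in the remaining details (splitting of the tautological bundle, Whitney formula for Segre classes, K\"unneth integration).
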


To conclude this introduction, we mention Lehn's conjecture \cite[Conjecture 4.9]{lehn}:
\begin{conj}
\label{conjlehn} One has
\begin{eqnarray}
\label{eqcongserie} s_{d,\pi,\kappa,e}(z)=\frac{(1-w)^a(1-2w)^b}{(1-6w+6w^2)^c},
\end{eqnarray}
where $a=\pi-2\kappa,\,b=d-2\pi+\kappa+3\chi,\,c=\frac{d-\pi}{2}+\chi$, $\chi=\frac{\kappa+e}{12}$
and the variable $w$ is related to $z$ by
$$z=\frac{w(1-w)(1-2w)^4}{(1-6w+6w^2)^3}.$$
\end{conj}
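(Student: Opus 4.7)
The plan is to combine the multiplicativity identity (Lemma \ref{theosumprod}) with the evaluations and vanishings of Theorems \ref{theopandha} and \ref{theoannuintro} to reduce the conjecture to identifying four universal power series in $z$, and then to match these against the stated right-hand side on enough test pairs $(\Sigma,H)$.

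First, Lemma \ref{theosumprod} is equivalent to the statement
$$
\log s_{d,\pi,\kappa,e}(z)=d\,\alpha(z)+\pi\,\beta(z)+\kappa\,\gamma(z)+e\,\delta(z)
$$
for some universal series $\alpha,\beta,\gamma,\delta\in\mathbb{Q}[[z]]$, since the invariants $(d,\pi,\kappa,e)$ are additive under disjoint unions while the generating series is multiplicative. The conjectured right-hand side has exactly the same shape once $\log(1-w)$, $\log(1-2w)$ and $\log(1-6w+6w^2)$ are expanded as series in $z$ via the substitution, because the exponents $a$, $b$, $c$ are $\mathbb{Q}$-linear in $(d,\pi,\kappa,e)$ (with $\chi=(\kappa+e)/12$). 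Thus Conjecture \ref{conjlehn} is equivalent to identifying $\alpha,\beta,\gamma,\delta$ with the four explicit $\mathbb{Q}$-linear combinations of those three series predicted by the conjecture; in particular the four series must satisfy one linear dependence, which one checks algebraically to be $2\beta+\gamma+11\delta=0$.

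Next, I would pin down $\alpha,\beta,\gamma,\delta$ by evaluating on four $\mathbb{Q}$-linearly independent test pairs. A $K3$ with $H^2=2g-2$ has invariants $(2g-2,0,0,24)$, handled by Theorem \ref{theopandha} as $g$ varies: this determines $\alpha$ and $\delta$. An abelian surface with a polarization of square $2$ has invariants $(2,0,0,0)$ and recovers $\alpha$ independently via the numbers $b_k$ of Section \ref{sec2}. A $K3$ blown up at one point with $H=\tau^*L(-lE)$ has invariants $(2g-2-l^2,l,0,25)$, and Theorem \ref{theoannuintro} combined with the product formula (\ref{eqsumprod}) recursively determines the corresponding $\tilde s_{k,g,l}$, and hence $\beta$. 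For the remaining coefficient $\gamma(z)$ of $\kappa$, one needs a surface with $K_S^2\neq 0$: the simplest candidate is $\mathbb{P}^2$ with $\mathcal{O}(1)$, invariants $(1,-3,9,3)$, where sufficiently many Segre numbers should be directly computable (or, more systematically, one would prove an analogue of Theorem \ref{theoannuintro} on a blow-up of a surface with non-trivial canonical class).

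The main obstacle is that, even once the four series $\alpha,\beta,\gamma,\delta$ are determined geometrically, matching them to the conjectured $\mathbb{Q}$-linear combinations of $\log(1-w(z))$, $\log(1-2w(z))$ and $\log(1-6w(z)+6w(z)^2)$ is an infinite identity between power series in $z$: the nonlinear substitution $z=w(1-w)(1-2w)^4/(1-6w+6w^2)^3$ precludes a finite coefficient-by-coefficient check. A complete proof therefore calls for a conceptual input --- either a Picard--Fuchs type differential equation satisfied by both sides (the three distinguished roots $w=0,\tfrac{1}{2},\tfrac{3\pm\sqrt{3}}{6}$ and the degree-$6$ covering $w\mapsto z$ strongly suggest a modular or Hurwitz-type interpretation), or a direct cohomological computation of the Chern character of $\mathcal{H}_{[k]}$ in the Nakajima basis that makes the exponential structure manifest and thereby produces the prescribed algebraic change of variables a priori rather than a posteriori.
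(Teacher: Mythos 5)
The statement you are asked to prove is a conjecture which the paper itself does \emph{not} prove: the paper only reduces it to Corollary \ref{coropourlehnintro}, i.e.\ to checking that the explicit Lehn function $f_{d,\pi,\kappa,e}(z)$ has vanishing $k$-th Taylor coefficient for $e=25$, $\kappa=-1$ and $(d,\pi)=(7(k-1),k-1)$ or $(7(k-1)+1,k)$; that check was carried out afterwards by Marian--Oprea--Pandharipande and Szenes--Vergne. Your reduction strategy (multiplicativity of $s(z)$ under disjoint union forces $\log s$ to be linear in $(d,\pi,\kappa,e)$, so four universal series suffice, to be pinned down on test surfaces) is essentially the paper's Proposition \ref{prodeterm}. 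But your execution contains a concrete error: the blow-up of a $K3$ at one point has $\kappa=K_{\widetilde S}^2=-1$ and $e=25$, not $\kappa=0$. Because of this, the blown-up $K3$ does not isolate the series attached to $\pi$; rather, each vanishing in Theorem \ref{theoannuintro} gives a relation involving \emph{both} unknown coefficients $C_k$ and $D_k$, namely $(k-1)C_k-D_k+\nu=0$ and $kC_k-D_k+\nu'=0$, and it is precisely the availability of the \emph{two} families $k=l+1$ and $k=l$ that makes this $2\times 2$ linear system solvable at each order. Consequently your detour through $\mathbb{P}^2$ (for which you give no actual computation of Segre numbers) is both unnecessary and unsupported.

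The second, and decisive, gap is the one you yourself flag: determining the four series geometrically does not by itself identify them with the conjectured algebraic function of $w(z)$. The paper's way around this is not a Picard--Fuchs or modularity argument but the observation that the Lehn function $f$ has, by construction, the same multiplicative form $A^dB^eC^\pi D^\kappa$, and agrees with $s$ in the $K$-trivial case $\pi=\kappa=0$ by \cite{MOP}; hence $f=s$ for all invariants \emph{if and only if} $f$ satisfies the same two vanishing families used in the recursion. This converts the ``infinite identity'' you worry about into one explicit coefficient identity per $k$ for a concretely given algebraic function --- still an infinite family of statements, and genuinely nontrivial (this is the content of \cite{MPRlehn}), but of an entirely different and more tractable nature than matching four unknown geometric series against a nonlinear change of variables. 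As written, your proposal neither completes this step nor correctly sets up the linear algebra that precedes it, so it does not constitute a proof of the conjecture.
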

This conjecture is proved  in \cite{MOP} for $K3$ and more generally $K$-trivial surfaces, that is for $\kappa=\pi=0$. Although we were not able to prove it in general,
our results imply the following:
\begin{coro} \label{coropourlehnintro} Lehn's conjecture is equivalent to the fact that the
development in power series of $z$ of the Lehn function
$f_{d,\pi,\kappa,e}(z)$
defined as the right hand side in (\ref{eqcongserie})
has vanishing
Taylor coefficient of order $k$ for $e=25,\,\kappa=-1$ and
$(d,\,\pi)=(7(k-1),k-1)$ or $(d,\,\pi)=(7(k-1)+1,k)$
\end{coro}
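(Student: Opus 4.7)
The forward direction is immediate: granting Lehn's conjecture, the claimed Taylor coefficients of $f_{d,\pi,\kappa,e}$ are exactly the Segre numbers $\tilde s_{k,g,l}$ of the blow-up of a $K3$ surface at one point. A short calculation using $d=2g-2-l^2$, $\pi=l$, $\kappa=-1$, $e=25$ confirms that the pairs $(d,\pi)=(7(k-1),k-1)$ and $(7(k-1)+1,k)$ correspond respectively to the cases $l=k-1$, $g=3k-2+(k-1)k/2$ and $l=k$, $g=3k-2+k(k+1)/2$ of Theorem~\ref{theoannuintro}, which supplies the vanishing.

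For the converse, I would exploit the multiplicativity of both $s$ and $f$ in $(d,\pi,\kappa,e)$: for $s$ this is Lemma~\ref{theosumprod}, and for $f$ it follows because the exponents $a,b,c$ in (\ref{eqcongserie}) are linear in $(d,\pi,\kappa,e)$ while $w$ depends only on $z$. Both series therefore admit unique factorizations
\[
s_{d,\pi,\kappa,e}(z)=A(z)^d B(z)^\pi C(z)^\kappa D(z)^e, \quad f_{d,\pi,\kappa,e}(z)=A_f(z)^d B_f(z)^\pi C_f(z)^\kappa D_f(z)^e,
\]
by power series with constant term $1$, and Lehn's conjecture becomes equivalent to the four equalities $A=A_f$, $B=B_f$, $C=C_f$, $D=D_f$. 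The first step is to use the already-known $K$-trivial case of Lehn's conjecture (MOP): abelian surfaces $(0,0,0,0)$ yield $A^d=A_f^d$, hence $A=A_f$, and $K3$ surfaces $(0,0,0,24)$ then give $D^{24}=D_f^{24}$, so $D=D_f$ after extracting the unique $24$th root with constant term $1$. The linear-in-$z$ identity $s_1=d$ moreover forces $B_1=C_1=(B_f)_1=(C_f)_1=0$.

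It then remains to pin down $B$ and $C$ using the vanishing statements. Expanding the identities
\[
[z^k]\bigl(A^{7(k-1)} B^{k-1} C^{-1} D^{25}\bigr)=0=[z^k]\bigl(A^{7(k-1)+1} B^{k} C^{-1} D^{25}\bigr),
\]
guaranteed by Theorem~\ref{theoannuintro}, and their analogues for $f$ given by hypothesis, I expect in each order $k\geq 2$ a $2\times 2$ linear system
\[
\begin{pmatrix} k-1 & -1\\ k & -1 \end{pmatrix}\begin{pmatrix} B_k\\ C_k \end{pmatrix}=\begin{pmatrix} R_k\\ R'_k \end{pmatrix},
\]
whose matrix has determinant $1$ and whose right-hand side depends only on the coefficients of $A$, $B$, $C$, $D$ in degrees strictly less than $k$. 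The same system with the same right-hand side determines $((B_f)_k,(C_f)_k)$, so inductively $B_k=(B_f)_k$ and $C_k=(C_f)_k$ for all $k$, completing the argument. The main obstacle is the bookkeeping to extract this $2\times 2$ system: one must verify that the top-order parts of the expansion really give the matrix above and, in particular, that the two vanishings are genuinely independent at every order $k\geq 2$. Everything else is formal once multiplicativity and the MOP result for $K$-trivial surfaces are in hand.
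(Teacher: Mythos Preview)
Your proposal is correct and follows essentially the same route as the paper: the paper reduces the corollary to Proposition~\ref{prodeterm}, whose proof writes $s(z)=A^dB^eC^\pi D^\kappa$, identifies $A,B$ from the $K$-trivial case via MOP, and then extracts exactly the $2\times2$ system $0=(k-1)C_k-D_k+\nu$, $0=kC_k-D_k+\nu'$ from the two vanishings, determining $C_k,D_k$ inductively. Apart from your relabeling of the four exponent-series (you attach $B$ to $\pi$ and $D$ to $e$, while the paper attaches $B$ to $e$ and $D$ to $\kappa$), and the minor slip in writing ``abelian surfaces $(0,0,0,0)$'' where you mean $(d,0,0,0)$, your argument and the paper's coincide.
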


\vspace{0.5cm}

Shortly after this paper was written,  Marian-Oprea-Pandharipande
(see \cite{MPRlehn}) and Szenes-Vergne independently  were able to check that the Lehn function satisfies the vanishing properties
stated in Corollary \ref{coropourlehnintro}, thus completing the proof of Lehn's conjecture.

Let us mention the following intriguing question: Lehn's conjecture (now a theorem) singles out
the class of pairs $(S,H)$ with the following
numerical properties:

\begin{eqnarray}\label{eqsirfsingle} H^2=0,\,H\cdot K_S=2K^2=2\chi(\mathcal{O}_S).
\end{eqnarray}

These conditions are indeed equivalent to the vanishing of the exponents $a,\,b$ and $c$ above, so that for these pairs, one has the vanishing $s_{2k}(\mathcal{H}_{[k]})=0$. It would be
nice to have a geometric proof of this.

\vspace{0.5cm}

{\bf Thanks.}  I thank Rahul Pandharipande for discussions and in particular for suggesting, after I had given a geometric proof of
 the vanishings (\ref{eqvan}) on $K3$ surfaces, to look at surfaces other than $K3$'s.
This work has been done during my stay at ETH-ITS. I acknowledge the support of
Dr. Max R\"ossler, the Walter Haefner Foundation and the ETH Zurich
Foundation.
\section{Geometric vanishing \label{secgeovan}}
Let $S$ be a $K3$ surface with ${\rm Pic}\,S=\mathbb{Z}H$, where
$H$ is an ample line bundle of self-intersection $2g-2$.
We give in this section a geometric proof of the vanishing result (\ref{eqvan}) proved in
\cite{MOP}.
\begin{prop}\label{provan} The Segre classes $s_{2k}(H_{[k]})$ vanish in the range
\begin{eqnarray} \label{eqrange} 3k-1>g>2k-2.
\end{eqnarray}
In particular, $s_{k,2k}=0$ and $s_{k,2k-1}=0$ when $k\geq 2$.
\end{prop}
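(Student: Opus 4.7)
The plan is to realise $s_{2k}(H_{[k]})$ as a top Chern class of the Lazarsfeld kernel on $S^{[k]}$: the upper bound $3k-1>g$ then yields the vanishing at once by a rank argument, while the lower bound $g>2k-2$ is needed only to guarantee that this kernel is actually a vector bundle.

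I begin with the tautological evaluation
$$\mathrm{ev}\colon H^0(S,H)\otimes\mathcal{O}_{S^{[k]}}\longrightarrow H_{[k]},$$
whose fibre at $[Z]$ is the restriction $H^0(S,H)\to H^0(H|_Z)$. From the exact sequence $0\to I_Z\otimes H\to H\to H|_Z\to 0$ together with $H^1(S,H)=0$ ($H$ ample on a $K3$), the cokernel of $\mathrm{ev}$ at $[Z]$ is $H^1(S,I_Z\otimes H)$. Granting pointwise vanishing of this $H^1$ for every $[Z]\in S^{[k]}$, $\mathrm{ev}$ is surjective and its kernel $M_H$ is a vector bundle of rank $g+1-k$ sitting in
$$0\longrightarrow M_H\longrightarrow H^0(S,H)\otimes\mathcal{O}_{S^{[k]}}\longrightarrow H_{[k]}\longrightarrow 0.$$
Whitney's formula gives $c(H_{[k]})\,c(M_H)=1$, hence $s(H_{[k]})=c(M_H)$ and in particular $s_{2k}(H_{[k]})=c_{2k}(M_H)$. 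The hypothesis $3k-1>g$ is exactly $\mathrm{rk}(M_H)=g+1-k<2k$, which forces $c_{2k}(M_H)=0$ as a cohomology class. The particular values $s_{k,2k-1}=s_{k,2k}=0$ for $k\geq 2$ then follow since both $g=2k-1$ and $g=2k$ lie in the range $3k-1>g>2k-2$.

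The main obstacle is therefore the pointwise claim $H^1(S,I_Z\otimes H)=0$ for every $[Z]\in S^{[k]}$ when $g\geq 2k-1$; this is where Lazarsfeld's ideas (slightly improved) enter. Suppose this $H^1$ were nonzero. Then the sub-linear system $|I_Z\otimes H|\subset|H|$ has dimension strictly greater than $g-k$, so a general $D\in|I_Z\otimes H|$ is a smooth irreducible curve (irreducibility from $\mathrm{Pic}\,S=\mathbb{Z}H$, smoothness from a Bertini argument on the sub-pencil), and using the adjunction $H|_D=K_D$ one reinterprets the excess as a line bundle $A:=\mathcal{O}_D(Z)$ on $D$ of degree $k$ with $h^0(A)\geq 2$. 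The Lazarsfeld--Mukai construction applied to $(D,A)$ produces a rank-$2$ sheaf $F$ on $S$ with $c_1(F)=H$, $c_2(F)=k$, and Mukai vector satisfying
$$v(F)^2\ =\ H^2-2\cdot 2(g+1-k)\ =\ 4k-2g-6,$$
which is strictly less than $-2$ under $g\geq 2k-1$. The hypothesis $\mathrm{Pic}\,S=\mathbb{Z}H$ then rules out any destabilising sub-line bundle $\mathcal{O}(nH)\hookrightarrow F$: integrality forces $n\geq 1$, and composition with the dualised defining sequence $0\to V^\vee\otimes\mathcal{O}_S\to F\to\iota_*(K_D-A)\to 0$ reduces the nonexistence to the trivial vanishing $H^0(S,\mathcal{O}(-H))=0$. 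Hence $F$ is stable, Mukai's bound $v(F)^2\geq-2$ applies, and we reach the desired contradiction.
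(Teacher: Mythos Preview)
Your overall strategy coincides with the paper's: reduce the vanishing of $s_{2k}(\mathcal H_{[k]})$ to the global generation of $\mathcal H_{[k]}$ by $H^0(S,H)$ (your kernel--bundle formulation $s_{2k}(\mathcal H_{[k]})=c_{2k}(M_H)$ is just a repackaging of the paper's projective--bundle argument), and then prove global generation, i.e.\ $H^1(S,\mathcal I_Z\otimes H)=0$ for every length--$k$ subscheme $Z$, under $g>2k-2$.

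The gap is in your proof of this last vanishing. You assert that a general $D\in|\mathcal I_Z\otimes H|$ is smooth ``by a Bertini argument on the sub-pencil''. But Bertini only gives smoothness away from the base locus, and $Z$ lies in the base locus. If $Z$ is not curvilinear---for instance $Z=\operatorname{Spec}(\mathcal O_{S,p}/\mathfrak m_p^2)$, a length--$3$ fat point---then \emph{every} curve containing $Z$ is singular at $p$, so no smooth $D$ exists and your Lazarsfeld--Mukai construction on $(D,A)$ cannot even be set up. Passing to a minimal $Z'\subset Z$ with $H^1(\mathcal I_{Z'}\otimes H)\neq 0$ does not help: $Z'$ need not be curvilinear either.

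This is exactly the point the paper singles out (``Lazarsfeld considers only subschemes supported on smooth curves''). The paper's fix is to avoid the curve altogether: from a nonzero class in $\operatorname{Ext}^1(\mathcal I_Z,H^{-1})$ one builds a rank--$2$ torsion--free (not necessarily locally free) sheaf $\mathcal E$ with $\chi(\mathcal E,\mathcal E)>2$, produces a non--scalar endomorphism, and derives a contradiction to non--splitness directly. Your argument becomes correct if you replace the curve step by this sheaf--theoretic one; the stability/Mukai--bound computation you give is essentially the dual incarnation of the same numerics, but it only applies once you know $\mathcal E$ (or $F$) is locally free, which fails precisely for non--curvilinear $Z$.
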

\begin{proof} Sections of $H$ provide sections of $\mathcal{H}_{[k]}$, or equivalently
of the line bundle
$\mathcal{O}_{\mathbf{P}(\mathcal{H}_{[k]}^*)}(1)$. In  fact, all  sections
 of $\mathcal{H}_{[k]}$ come from $H^0(S,H)$. As we are on a $K3$ surface, $H^0(S,H)$ has dimension
$g+1$. We thus have a rational map
$\phi:\mathbf{P}(\mathcal{H}_{[k]}^*)\dashrightarrow\mathbf{P}^g$ such that $\phi^*\mathcal{O}_{\mathbf{P}^g}(1)=\mathcal{O}_{\mathbf{P}(\mathcal{H}_{[k]}^*)}(1)$.
The top Segre class of $\mathcal{H}_{[k]}^*$ (or $\mathcal{H}_{[k]}$) is the top self-intersection
of $c_1(\mathcal{O}_{\mathbf{P}(\mathcal{H}_{[k]}^*)}(1))$ on
$\mathbf{P}(\mathcal{H}_{[k]}^*)$.
We observe that the first inequality in (\ref{eqrange}) says that ${\rm dim}\,\mathbf{P}(\mathcal{H}_{[k]}^*)>{\rm dim}\,\mathbf{P}^g$, so the proposition is a consequence of
the following lemma which is a mild  generalization of Lazarsfeld's result
in \cite{laz}, saying that
smooth curves in $\mid H\mid$ are Brill-Noether generic:
\begin{lemm}\label{lelaz} If $g>2k-2$, the vector bundle $\mathcal{H}_{[k]}$ is generated by the sections
coming from $H^0(S,H)$.
\end{lemm}
Indeed, this last statement says that the  rational map $\phi$ is actually a morphism so that
the top self-intersection of a line bundle pulled-back via $\phi$ is $0$.
\end{proof}
\begin{proof}[Proof of Lemma \ref{lelaz}] The proof is by contradiction. It is obtained by applying Lazarsfeld's arguments in
\cite{laz}.
For convenience of the reader and because Lazarsfeld considers only subschemes supported on smooth curves, we give  the complete argument: If $z\in S^{[k]}$ is a point
such that $H^0(S,H)\rightarrow  \mathcal{H}_{[k],z}$ is not surjective, $z$ corresponds to a length $k$ subscheme  $Z\subset S$  such that the restriction map
$H^0(S,H)\rightarrow H^0(H_{\mid Z})$ is not surjective, hence $H^1(S,\mathcal{I}_Z(H))\not=0$.
By Serre  duality, we thus have a nonzero class $e\in {\rm Ext}^1(\mathcal{I}_Z,H^{-1})$, which provides a
torsion free rank $2$ sheaf  $\mathcal{E}$ fitting into an exact sequence
\begin{eqnarray}\label{eqexactlaz}  0\rightarrow H^{-1}\rightarrow \mathcal{E}\rightarrow \mathcal{I}_Z\rightarrow 0.
\end{eqnarray}

Note that  the original  Lazarsfeld argument deals with all
subschemes which are locally complete intersection, for which $\mathcal{E}$ is locally free (assuming $k$ is minimal).
We have $c_1(\mathcal{E})=H^{-1}$ and $c_2(\mathcal{E})=k$. It thus follows that
$$\chi(\mathcal{E},\,\mathcal{E}):=h^0(End(\mathcal{E}))-{\rm dim}\,Ext^1(\mathcal{E},\mathcal{E})+
{\rm dim}\,Ext^2(\mathcal{E},\mathcal{E})=4\chi(\mathcal{O}_S)+c_1(\mathcal{E})^2-4c_2(\mathcal{E})
$$
$$=8+2g-2-4k.$$
The second inequality in (\ref{eqrange}) thus gives
$$\chi(\mathcal{E},\,\mathcal{E})
>2.$$
We thus conclude (applying Serre  duality
showing that
${\rm dim}\,Ext^2(\mathcal{E},\mathcal{E})=h^0(End(\mathcal{E}))$) that $\mathcal{E}$ has an endomorphism $f:\mathcal{E}\rightarrow \mathcal{E}$
which is not proportional to the identity, hence can be assumed to be of generic  rank $ 1$.
Let $B$ be the line bundle defined as $\mathcal{F}^{**}$ where $\mathcal{F}$ is the saturation
of ${\rm Im}\,f$ in $\mathcal{E}$. The line bundle $B$ must be a power of $H$. The non-split exact sequence (\ref{eqexactlaz}) shows that
${\rm Hom}\,(\mathcal{E},H^{-1})=0$ since  the exact sequence (\ref{eqexactlaz}) is not split, so $B$ must be trivial or  a positive power of
$H$.
It follows that  $ \mathcal{F}$ is equal to
$H^{\otimes k}\otimes \mathcal{I}_W$ for some $k\geq 0$ and for some $0$-dimensional subscheme
 $W\subset Z$ (which can appear only where $\mathcal{E}$ is not locally free).  As $H^{\otimes k}\otimes \mathcal{I}_W$ is not contained
 in $H^{-1}$, it must map nontrivially   to
 $\mathcal{I}_Z$ via $f: \mathcal{E}\rightarrow \mathcal{I}_Z$, so that finally $k=0$ and
 $\mathcal{I}_W\subset \mathcal{I}_Z$.
As $\mathcal{I}_Z\subset \mathcal{I}_W$ and
${\rm End}(\mathcal{I}_Z)=\mathbb{C}Id$, we conclude that in fact
$f$ induces an isomorphism $\mathcal{I}_W\cong \mathcal{I}_Z$ and
the sequence (\ref{eqexactlaz}) is split, which is a contradiction.
\end{proof}

We note for later reference the following simple fact on which  the proof of Proposition
\ref{provan} rests. We will say that $H$ is $k$-ample if
$\mathcal{H}_{[k]}$ is generated by its global sections. $1$-ample means that
$H$ is generated by sections, and $2$-ample means that $H$ is very ample.
\begin{lemm} \label{lefact} Let $\Sigma$ be a surface, $H$ a line bundle on $\Sigma$. Assume
that $H$ is $k$-ample and $h^0(\Sigma,H)< 3k$. Then
$s_{2k}(\mathcal{H}_{[k]})=0$.
\end{lemm}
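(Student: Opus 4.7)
The plan is to reuse, in this abstract setting, the geometric argument already carried out in the proof of Proposition~\ref{provan}. The two ingredients the argument really needs are: global generation of $\mathcal{H}_{[k]}$ by sections pulled back from $H$, and a dimension count that makes the target $\mathbf{P}^{h^{0}(H)-1}$ too small compared with $\mathbf{P}(\mathcal{H}_{[k]}^*)$.

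First I will recall the standard identification $H^0(\Sigma^{[k]},\mathcal{H}_{[k]})=H^0(\Sigma,H)$, which follows from the description of $\mathcal{H}_{[k]}$ as the pushforward from the universal subscheme $\Xi_k\subset \Sigma\times\Sigma^{[k]}$ of $p_1^*H$. Combined with $k$-ampleness, this yields a surjection of sheaves $H^0(\Sigma,H)\otimes\mathcal{O}_{\Sigma^{[k]}}\twoheadrightarrow\mathcal{H}_{[k]}$, and projectivising gives a genuine morphism (not merely a rational map)
\[
\phi:\mathbf{P}(\mathcal{H}_{[k]}^*)\longrightarrow\mathbf{P}(H^0(\Sigma,H)^*)=\mathbf{P}^{h^{0}(H)-1},
\]
with $\phi^*\mathcal{O}(1)=\mathcal{O}_{\mathbf{P}(\mathcal{H}_{[k]}^*)}(1)$. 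It is exactly at this step that the $k$-ampleness hypothesis is used: without it, $\phi$ would be defined only away from the base locus.

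Next I will perform the dimension count. The projective bundle $\mathbf{P}(\mathcal{H}_{[k]}^*)$ has dimension $\dim\Sigma^{[k]}+(\mathrm{rk}\,\mathcal{H}_{[k]}-1)=2k+(k-1)=3k-1$, and by definition the top Segre class is
\[
s_{2k}(\mathcal{H}_{[k]})=\pi_*\bigl(c_1(\mathcal{O}_{\mathbf{P}(\mathcal{H}_{[k]}^*)}(1))^{3k-1}\bigr),
\]
with $\pi:\mathbf{P}(\mathcal{H}_{[k]}^*)\to\Sigma^{[k]}$ the projection. Rewriting the bracketed class as $\phi^*\bigl(c_1(\mathcal{O}_{\mathbf{P}^{h^{0}(H)-1}}(1))^{3k-1}\bigr)$ and observing that the hypothesis $h^0(\Sigma,H)<3k$ is equivalent to $3k-1>\dim\mathbf{P}^{h^{0}(H)-1}$, the hyperplane class on $\mathbf{P}^{h^{0}(H)-1}$ is raised to a power exceeding that projective space's dimension, so it already vanishes in cohomology. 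Pulling back by $\phi$ and then pushing forward by $\pi$ gives $s_{2k}(\mathcal{H}_{[k]})=0$.

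The argument is essentially a two-line reformulation of what has already been done for Proposition~\ref{provan}; I do not anticipate any serious obstacle. The only external input beyond elementary linear algebra is the identification $H^0(\Sigma^{[k]},\mathcal{H}_{[k]})=H^0(\Sigma,H)$, which is standard for tautological sheaves on Hilbert schemes of points.
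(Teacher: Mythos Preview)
Your argument is correct and is exactly the abstraction of the geometric argument already carried out in the proof of Proposition~\ref{provan}; the paper does not give a separate proof of this lemma but simply records it as the fact underlying that proof. The only external input you invoke, the identification $H^0(\Sigma^{[k]},\mathcal{H}_{[k]})\cong H^0(\Sigma,H)$, is also used (without proof) in the paper's treatment of Proposition~\ref{provan}, so you are on the same footing.
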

\section{Proof of Theorem \ref{theopandha} \label{sec2}}
We are going to prove here Theorem \ref{theopandha} for $2g-2\geq 0$, i.e. $g\geq 1$, by induction on $g$. The case where
$g$ is nonpositive works similarly, by induction on $-g$.
Let $S'$ be a $K3$ surface equiped with a line bundle $H'$ such that $c_1(H')^2=2(g-1)-2$. Let $A$
be an abelian surface with a principal polarization
$\theta$, so that $c_1(\theta)^2=2$.
The surface $\Sigma=S'\sqcup A$ equiped with the line bundle
$H_\Sigma$ which is equal to $H'$ on $S'$ and $\theta$ on $A$, has the same characteristic numbers
as our original pair $(S,H)$ where $S$ is a $K3$ surface, and $H$ is  a polarization with self-intersection $2g-2$.
On the other hand, $\Sigma^{[k]}$ is the disjoint union
$$\Sigma^{[k]}=\sqcup_{l=0}^{l=k} {S'}^{[k-l]}\times A^{[l]},$$
and on each summand ${S'}^{[k-l]}\times A^{[l]}$, the vector bundle
$H_{\Sigma,[k]}$ equals
$pr_1^*H'_{[k-l]}\oplus pr_2^* \theta_{[l]}$.
We thus conclude that we have the following formula, where $b_l:=\int_{A^{[l]}}s_{2l}(\theta_{[2l]})$ (this is a particular case of (\ref{eqsumprod})):
\begin{eqnarray}\label{eqrec} s_{k,g}=\sum_{l=0}^{l=k} b_l s_{k-l,g-1}.
\end{eqnarray}
\begin{coro} \label{coroutile} The numbers $s_{k,g}$ for $g\geq 1$  are fully determined by the numbers $b_l,\,0\leq l\leq k$ and the numbers
$s_{l,1},\,l\leq k,\,s_{1,g},\,g\geq 1$ (or $s_{0,g}$).
\end{coro}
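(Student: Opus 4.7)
The argument is an induction on $g$ built directly on the recursion (\ref{eqrec}). Using $b_0=1$, I rewrite that recursion as
\[
 s_{k,g} \;=\; s_{k,g-1} \;+\; \sum_{l=1}^{k} b_l\, s_{k-l,\,g-1}, \qquad g\ge 2,\ k\ge 0,
\]
which is ``triangular'' in the sense that $s_{k,g}$ is expressed purely in terms of values $s_{j,g-1}$ with $0\le j\le k$ together with the coefficients $b_1,\dots,b_k$.

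Fix $k_0\ge 0$ and assume given the data in the hypothesis: the numbers $b_l$ for $0\le l\le k_0$, the initial values $s_{l,1}$ for $0\le l\le k_0$, and the sequence $s_{1,g}$, $g\ge 1$ (or, equivalently, as I will explain, the constant value $s_{0,g}$). I then induct on $g\ge 1$: at the passage $g-1\mapsto g$, apply the displayed recursion to each $k\le k_0$; by the inductive hypothesis the right-hand side is entirely known, so $s_{k,g}$ is determined. Iterating, this yields all $s_{k,g}$ with $0\le k\le k_0$ and $g\ge 1$.

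It remains to justify the role of the ``extra'' datum. The $k=0$ case of the recursion reads $s_{0,g}=s_{0,g-1}$, so $s_{0,g}$ is a constant in $g$, and this constant enters the right-hand side of the recursion whenever $k\ge 1$; it must therefore be known. Conversely, from the $k=1$ recursion $s_{1,g}=s_{1,g-1}+b_1\,s_{0,g-1}$ together with $b_1=2\neq 0$, knowledge of the full sequence $s_{1,g}$, $g\ge 1$, recovers $s_{0,g-1}$ at every stage. This accounts for the two equivalent formulations in the statement; no serious obstacle arises, the corollary being a pure bookkeeping consequence of the product formula (\ref{eqrec}).
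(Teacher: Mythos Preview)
Your argument is correct and is exactly what the paper intends: the corollary is stated without proof, as an immediate consequence of the recursion (\ref{eqrec}), and your induction on $g$ spells out precisely that bookkeeping. Your remark that the $k=0$ and $k=1$ instances of (\ref{eqrec}) make the data $s_{0,g}$ and $s_{1,g}$ interchangeable (given $b_1\neq 0$) is the right reading of the parenthetical ``(or $s_{0,g}$)'' in the statement.
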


\begin{rema} \label{remark} {\rm We have $b_0=1$, $b_1=2$, and similarly $s_{0,g}=1,\,s_{1,g}=2g-2$.}
\end{rema}
\begin{lemm}
Suppose that the numbers $b_l,\,0\leq l\leq k-1$ and the numbers
$s_{l,1},\,0\leq l\leq k-1$ are given, with $b_0=1,\,b_1=2$. Then the numbers $s_{k,1}$ and $b_k$ are determined
by the condition $b_0=1,\,b_1=2$, equation (\ref{eqrec}), and the vanishing equations
\begin{eqnarray}\label{eqvanseulmentdeux} s_{k,2k}=0,\,s_{k,2k-1}=0
\end{eqnarray}
for $k\geq 2$
proved in Proposition \ref{provan}.
\end{lemm}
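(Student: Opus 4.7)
The plan is to iterate the recursion (\ref{eqrec}) and observe that, among the quantities appearing in the expression for $s_{k,g}$, only $s_{k,1}$ and $b_k$ are genuinely unknown; the two vanishing equations (\ref{eqvanseulmentdeux}) then provide a non-degenerate $2\times 2$ linear system for these two unknowns.

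Formally, I would encode the recursion via the generating functions
\[ s_g(z):=\sum_{k\ge 0} s_{k,g}\, z^k,\qquad b(z):=\sum_{l\ge 0} b_l\, z^l. \]
Equation (\ref{eqrec}) is the convolution identity $s_g(z)=b(z)\, s_{g-1}(z)$, hence by induction on $g$,
\[ s_g(z)=b(z)^{g-1}\, s_1(z),\qquad g\geq 1. \]
Extracting the coefficient of $z^k$ and treating $b_0,\ldots,b_{k-1}$ and $s_{0,1},\ldots,s_{k-1,1}$ as given, I would establish the key identity
\[ s_{k,g}=s_{k,1}+(g-1)\, b_k+R_{k,g}, \]
where $R_{k,g}$ depends only on the given data. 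The coefficient of $s_{k,1}$ is $[z^0]b(z)^{g-1}=b_0^{g-1}=1$; and in the expansion $[z^k]b(z)^{g-1}=\sum_{l_1+\cdots+l_{g-1}=k} b_{l_1}\cdots b_{l_{g-1}}$, the monomials containing $b_k$ must have exactly one factor equal to $b_k$ and the remaining $g-2$ factors equal to $b_0=1$, contributing $(g-1)b_k$; after multiplication by $s_{0,1}=1$ this yields the coefficient $g-1$, while all other contributions involve only the known quantities.

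Applying this identity to $g=2k$ and $g=2k-1$, the vanishings (\ref{eqvanseulmentdeux}) become the linear system
\[ \begin{pmatrix} 1 & 2k-1 \\ 1 & 2k-2\end{pmatrix}\begin{pmatrix} s_{k,1}\\ b_k\end{pmatrix}=\begin{pmatrix} -R_{k,2k}\\ -R_{k,2k-1}\end{pmatrix}, \]
whose determinant is $(2k-2)-(2k-1)=-1\ne 0$. Hence $s_{k,1}$ and $b_k$ are uniquely determined, as claimed. The argument is purely formal; the only delicate point is the linear separation of the two unknowns in $s_{k,g}$, which relies crucially on $b_0=1$ and $s_{0,1}=1$. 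Once this separation is noted, there is no genuine obstacle, and it is precisely the availability of two distinct values $g=2k-1$ and $g=2k$ in the vanishing statement (\ref{eqvanseulmentdeux}) that ensures the resulting $2\times 2$ system has maximal rank.
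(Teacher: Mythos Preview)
Your proposal is correct and follows essentially the same route as the paper: iterate the recursion (\ref{eqrec}) down to $g=1$, observe that the only unknown contributions to $s_{k,g}$ are $s_{k,1}$ with coefficient $1$ and $b_k$ with coefficient $g-1$, and then solve the resulting $2\times 2$ system coming from $g=2k$ and $g=2k-1$. The only cosmetic difference is that you package the iteration via the generating-function identity $s_g(z)=b(z)^{g-1}s_1(z)$, whereas the paper writes out the chain $s_{k,g}=s_{k,g-1}+(\ldots)+b_k$ step by step; both arrive at the identical invertible system and both implicitly use $s_{0,1}=1$ (Remark~\ref{remark}) to pin down the coefficient of $b_k$.
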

\begin{proof} Indeed, by Corollary \ref{coroutile},  all the numbers
$s_{l,g'}$ for $ g'\leq g-1$ and $l\leq k-1$ are determined by $b_l,\,0\leq l\leq k-1$ and
$s_{l,1},\,0\leq l\leq k-1$.
We thus can write (\ref{eqrec}) as
$$s_{k,g}=s_{k,g-1}+(\ldots)+ b_k,$$
$$s_{k,g-1}=s_{k,g-2}+(\ldots)+b_k,$$
$$\ldots$$
where the expressions $(\ldots)$ in the middle are determined by $b_l,\,0\leq l\leq k-1$ and
$s_{l,1},\,0\leq l\leq k-1$.
Combining these equations, we get
\begin{eqnarray} s_{k,2k}=s_{k,1}+(\ldots)+ (2k-1)b_k\\
\nonumber
s_{k,2k-1}=s_{k,1}+(\ldots)+(2k-2)b_k,
\end{eqnarray}
hence we can see the equations $s_{k,2k}=0,\,s_{k,2k-1}=0$ as
 a system of two affine equations in the two variables $s_{k,1}$ and $b_k$, whose
linear part is invertible and the constants are determined by $b_l,\,0\leq l\leq k-1$ and
$s_{l,1},\,0\leq l\leq k-1$. The numbers $s_{k,1}$ and $b_k$ are thus uniquely determined by these equations and the numbers  $b_l,\,0\leq l\leq k-1$ and
$s_{l,1},\,0\leq l\leq k-1$.
\end{proof}
\begin{coro}\label{corounique}  There exist unique sequences of  numbers
$s_{k,g},\,k\geq 0,\,g\geq1$ and $b_l,\,l\geq 0$ satisfying:

\begin{enumerate}
\item \label{i} $b_0=1$, $b_1=2$,
\item \label{ia} $s_{0,g}=1$, $s_{1,g}=2g-2$,
\item \label{ii} $s_{k,2k}=0,\,s_{k,2k-1}=0$ for $k\geq 2$.
\item \label{iii} $s_{k,g}=\sum_{l=0}^{l=k} b_l s_{k-l,g-1}$.
\end{enumerate}
\end{coro}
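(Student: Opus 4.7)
The plan is to split Corollary \ref{corounique} into existence and uniqueness, both of which follow almost immediately from results already assembled in the section.

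For existence I would exhibit the geometric sequences as witnesses. Setting $s_{k,g}=\int_{S^{[k]}}s_{2k}(\mathcal{H}_{[k]})$ for $(S,H)$ a polarized $K3$ surface with $c_1(H)^2=2g-2$, and $b_l=\int_{A^{[l]}}s_{2l}(\theta_{[l]})$ for a principally polarized abelian surface $(A,\theta)$, each of the four conditions can then be checked one by one: items (\ref{i}) and (\ref{ia}) are recorded in Remark \ref{remark}; condition (\ref{ii}) is the vanishing of Proposition \ref{provan} specialised to $g=2k-1$ and $g=2k$; and condition (\ref{iii}) is exactly the recursion (\ref{eqrec}), a consequence of Theorem \ref{theoegl} applied to the characteristic numbers of $S'\sqcup A$ combined with the product formula of Lemma \ref{theosumprod}.

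For uniqueness I would induct on $k$. The cases $k=0,1$ are forced by (\ref{i}) and (\ref{ia}) alone, and (\ref{iii}) is automatically consistent with them. For the inductive step, suppose that $b_l$ and $s_{l,g}$ have been uniquely determined for all $l\leq k-1$ and all $g\geq 1$. The lemma just preceding the corollary then implies that iterating (\ref{iii}) from $g=2k$ downward produces two relations of the shape $s_{k,2k}=s_{k,1}+C_1+(2k-1)b_k$ and $s_{k,2k-1}=s_{k,1}+C_2+(2k-2)b_k$, where $C_1,C_2$ depend only on data already fixed by the primary inductive hypothesis. Imposing (\ref{ii}) therefore yields an affine $2\times 2$ system in the unknowns $(s_{k,1},b_k)$ whose linear part has determinant $-1$, uniquely pinning down $s_{k,1}$ and $b_k$. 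Once these are known, a secondary induction on $g$ using (\ref{iii}) determines $s_{k,g}$ for all $g\geq 2$, because the sum $\sum_{l=0}^{k}b_l s_{k-l,g-1}$ involves only $s_{k,g-1}$ (known by the secondary hypothesis) and values $s_{k-l,g-1}$ for $l\geq 1$ (known from the primary hypothesis, together with the already-determined $b_l$).

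There is no real obstacle in the corollary itself: all genuine content sits in Proposition \ref{provan} (supplying condition (\ref{ii})) and in the preceding lemma (supplying the invertibility of the $2\times 2$ system). Corollary \ref{corounique} is then pure bookkeeping, repackaging those inputs as an inductive characterisation of the two sequences $(s_{k,g})$ and $(b_l)$.
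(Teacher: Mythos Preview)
Your proposal is correct and matches the paper's approach: existence via the geometric Segre numbers (using Remark \ref{remark}, Proposition \ref{provan}, and (\ref{eqrec})), and uniqueness by the double induction implicit in Corollary \ref{coroutile} together with the preceding lemma, whose $2\times2$ linear system you correctly identify as invertible. The paper leaves the corollary unproved precisely because it is, as you say, bookkeeping on top of those two inputs.
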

\begin{proof}[Proof of Theorem \ref{theopandha}]
The numbers $s'_{k,g}:=2^k\binom{g-2k+1}{k}$ satisfy the vanishings $s'_{k,2k}=0,\,s'_{k,2k-1}=0$ for $k\geq 2$, that is, condition \ref{ii} of Corollary \ref{corounique}. They also satisfy the condition
$s'_{1,g}=2g-2$, that is, condition \ref{ia} of Corollary \ref{corounique}.
In order to show that $s_{k,g}=s'_{k,g}$, it suffices by Corollary \ref{corounique} to show
that they also satisfy condition \ref{iii} for adequate numbers $b'_l$, which is proved in the following
Lemma \ref{lecombelow}.
\end{proof}
\begin{lemm} \label{lecombelow} There exist numbers $b'_l,\,l\geq0$ with $b'_0=1,\,b'_1=2$ such that
for any $g\geq 1$
\begin{eqnarray}\label{eqconcl} s'_{k,g}=\sum_{l=0}^{k} b'_l s'_{k-l,g-1}.
\end{eqnarray}
\end{lemm}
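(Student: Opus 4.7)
The plan is as follows. The identity (\ref{eqconcl}) is equivalent to $F_g(z) = B(z)\, F_{g-1}(z)$, where
$$F_g(z) := \sum_{k \geq 0} s'_{k,g}\, z^k = \sum_k 2^k \binom{g-2k+1}{k} z^k, \qquad B(z) := \sum_l b'_l\, z^l.$$
I will construct $B$ as an explicit algebraic formal power series and derive a closed-form expression for $F_g$ via Lagrange--B\"urmann inversion.

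First, define $B(z)$ as the unique formal power series with $B(0) = 1$ satisfying
$$B^3 - B^2 = 2z.$$
Existence and uniqueness follow from the formal implicit function theorem, since the derivative of $B^3 - B^2$ with respect to $B$ equals $1$ at $B = 1$. Implicit differentiation at $z = 0$ gives $B'(0) = 2$, whence $b'_0 = 1$ and $b'_1 = 2$ as required. Setting $u := B - 1$, one has $u(0) = 0$ and $u(1+u)^2 = 2z$.

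The key step applies the Lagrange--B\"urmann formula to the change of variables $u = (2z)\,\phi(u)$ with $\phi(u) := (1+u)^{-2}$ and test function $H(u) := (1+u)^{g+1}$. Since $[u^k]\bigl((1+u)^{g+1-2k}\bigr) = \binom{g+1-2k}{k}$, one has
$$F_g(z) = \sum_k (2z)^k \binom{g+1-2k}{k} = \sum_k (2z)^k\,[u^k]\bigl(H(u)\,\phi(u)^k\bigr) = \frac{H(u)}{1 - 2z\,\phi'(u)}.$$
Now $\phi'(u) = -2(1+u)^{-3} = -2/B^3$, and the defining cubic gives $2z/B^3 = (B^3 - B^2)/B^3 = 1 - 1/B$; hence
$$1 - 2z\,\phi'(u) = 1 + \frac{4z}{B^3} = 3 - \frac{2}{B} = \frac{3B - 2}{B},$$
so that
$$F_g(z) = \frac{B(z)^{g+2}}{3\,B(z) - 2}.$$
Taking the ratio, $F_g/F_{g-1} = B$, which upon extracting the coefficient of $z^k$ is exactly (\ref{eqconcl}).

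The main obstacle is the Lagrange--B\"urmann computation, and in particular the simplification $1 - 2z\,\phi'(u) = (3B-2)/B$, which depends crucially on the defining cubic $2z = B^3 - B^2$. An elementary alternative route would first extract the three-term recurrence $F_g(z) = F_{g-1}(z) + 2z\,F_{g-3}(z)$ from Pascal's identity $\binom{g-2k+1}{k} = \binom{g-2k}{k} + \binom{g-2k}{k-1}$ (valid for the polynomial extension of $\binom{n}{k}$ in $n$), and then induct on $g$ via $B^3 = B^2 + 2z$ to establish $F_g = B^g F_0$; this route, however, is less conceptual and requires verifying three base cases $g=0,1,2$ by direct computation.
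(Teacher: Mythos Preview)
Your proof is correct and takes a genuinely different route from the paper's. The paper argues by linear algebra and Pascal's rule: since $g\mapsto s'_{k,g}$ is a polynomial of degree exactly $k$ with leading coefficient $2^k$, the family $\{s'_{l,g-1}\}_{0\le l\le k}$ is a basis of $\mathbb{C}[g]_{\le k}$, so for each fixed $k$ there exist unique coefficients $b'_{l,k}$ with $s'_{k,g}=\sum_l b'_{l,k}\,s'_{k-l,g-1}$ and $b'_{0,k}=1$; the Pascal identity $s'_{k,g}-s'_{k,g-1}=2\,s'_{k-1,g-3}$ (equivalently your three--term recurrence $F_g=F_{g-1}+2zF_{g-3}$) is then fed back into the defining relation to show $b'_{l,k}=b'_{l,k-1}$, so the coefficients are in fact independent of $k$.

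Your approach instead produces $B$ and $F_g$ in closed form, $B^3-B^2=2z$ and $F_g=B^{g+2}/(3B-2)$, via Lagrange--B\"urmann. This is strictly more informative: it identifies the $b'_l$ explicitly as the coefficients of an algebraic series and exhibits the algebraic structure of $F_g$, which is in the spirit of the Lehn function appearing later in the paper. The price is the reliance on Lagrange inversion, whereas the paper's argument is entirely elementary and never needs to know what $B$ is. Your ``alternative route'' (define $B$ by the cubic, then induct on $g$ using $F_g=F_{g-1}+2zF_{g-3}$ and $B^3=B^2+2z$) is the natural generating-function translation of the paper's proof, with the added input that one has guessed $B$ in advance.
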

\begin{proof} We observe that $s'_{k,g}$ is, as a function of $g$, a polynomial of degree exactly $k$, with leading coefficient $2^k$. Hence the $s'_{l,g}$ for $0\leq l\leq k-1$ form a basis of the space of polynomials of degree $k-1$, and for $k$ fixed, there exist uniquely defined  numbers $b'_{l,k}$, $l=0,\ldots, k$, with $b'_{0,k}=1$, such that for any $g$:
\begin{eqnarray}\label{eqconcldepk} s'_{k,g}=\sum_{l=0}^{k} b'_{l,k} s'_{k-l,g-1}.
\end{eqnarray}
Let us prove that $b'_{l,k}=b'_{l,k-1}$ for $l\leq k-1$. We have
$$ \binom{g-2k+1}{k}=\binom{g-2k}{k}+\binom{g-2k}{k-1},
$$
that is,
\begin{eqnarray}\label{eqtriangle}2s'_{k-1,g-3}= s'_{k,g}-s'_{k,g-1},
\end{eqnarray}
with the convention that $s'_{k,g}=0$ for $k<0$.
It follows by definition of $b'_{l,k}$ that
$$2s'_{k-1,g-3}=\sum_{l=0}^{k}b'_{l,k} s'_{k-l,g-1}-\sum_{l=0}^{k}b'_{l,k} s'_{k-l,g-2}=\sum_{l=0}^{k}b'_{l,k}(s'_{k-l,g-1}- s'_{k-l,g-2}),$$
which gives, by applying (\ref{eqtriangle}) again to each term
 in the right hand side:
$$2s'_{k-1,g-3}=2\sum_{l=0}^kb'_{l,k}s'_{k-l-1,g-4}=2\sum_{l=0}^{k-1}b'_{l,k}s'_{k-l-1,g-4}.$$
By definition of $b'_{l,k-1}$, this provides $b'_{l,k}=b'_{l,k-1}$.
\end{proof}
\section{Further geometric vanishing}
We discuss in this section similar geometric vanishing results for the Segre classes on the blow-up of a $K3$ surface at one point.
The setting is thus the following: $S$ is a $K3$ surface with ${\rm Pic}\,S=\mathbb{Z}L,\,L^2=2g-2$,
and $x\in S$ is a point. The surface $\tau:\widetilde{S}\rightarrow S$ is the blow-up of $S$ at $x$ with exceptional curve $E$,
and $H:=\tau^*L(-l E)\in {\rm Pic}\,\widetilde{S}$ for some positive  integer $l$.
Our main goal is to discuss the analogue of Lemma \ref{lelaz}
in this context.
Note that, when $H$ is very ample, the curve $E$ has degree $l$ in the embedding given
by $|H|$, so that the vector bundle $\mathcal{H}_{[k]}$ can be generated by sections only when $k\leq l+1$.

To start with, we have:
\begin{prop}\label{propourEsansZ} Let $S$ be a $K3$ surface with Picard group generated by
$L$, $L^2=2g-2$. Let $\tau:\widetilde{S}\rightarrow S$ be the blow-up at a point $x\in S$. Then, denoting $H=\tau^*L(-lE)$,
if
\begin{eqnarray}\label{eqinegstilde}4+2g>(l+1)^2,
 \end{eqnarray}  one has $H^1(\widetilde{S},H)=0$. It follows that
$h^0(\widetilde{S},H)=g+1-\frac{l(l+1)}{2}$.
\end{prop}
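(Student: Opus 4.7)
The plan is to split the statement into a Riemann--Roch computation of $\chi(\widetilde{S},H)$ and the vanishing $H^1(\widetilde{S},H)=0$; for the latter I would reduce to a cohomological vanishing on $S$ via the Leray spectral sequence, then apply a Lazarsfeld-type argument in the spirit of Lemma \ref{lelaz}.

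The numerical part is routine. With $K_{\widetilde{S}}=E$, $H^2=2g-2-l^2$, $H\cdot K_{\widetilde{S}}=l$, and $\chi(\mathcal{O}_{\widetilde{S}})=2$, Riemann--Roch gives $\chi(\widetilde{S},H)=g+1-l(l+1)/2$. The vanishing $H^2(\widetilde{S},H)=0$ follows from Serre duality: an effective class equivalent to $(l+1)E-\tau^*L$ would intersect the nef class $\tau^*L$ in $-L^2<0$, which is absurd. So the claimed formula for $h^0$ reduces to the vanishing of $H^1$.

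To push things down to $S$, an easy induction on $l$ using the sequence $0\to\mathcal{O}_{\widetilde{S}}(-lE)\to\mathcal{O}_{\widetilde{S}}(-(l-1)E)\to\mathcal{O}_E(l-1)\to 0$ and $H^1(\mathbf{P}^1,\mathcal{O}(n))=0$ for $n\geq -1$ gives $\tau_*\mathcal{O}_{\widetilde{S}}(-lE)=\mathfrak{m}_x^l$ and $R^1\tau_*\mathcal{O}_{\widetilde{S}}(-lE)=0$; combining with the projection formula and Leray's spectral sequence, $H^1(\widetilde{S},H)\cong H^1(S,L\otimes \mathfrak{m}_x^l)$. Now I would argue by contradiction as in Lemma \ref{lelaz}: if $H^1(S,L\otimes \mathfrak{m}_x^l)\neq 0$, Serre duality on the $K3$ produces a non-split extension
\[ 0\to L^{-1}\to\mathcal{E}\to\mathfrak{m}_x^l\to 0 \]
with $\mathcal{E}$ torsion-free of rank $2$, $c_1(\mathcal{E})=-L$, $c_2(\mathcal{E})=l(l+1)/2$. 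The endgame from the proof of Lemma \ref{lelaz} carries over verbatim: a non-scalar endomorphism of $\mathcal{E}$, taken of generic rank $1$, yields a line subsheaf whose reflexive saturation is $L^a$ (as ${\rm Pic}\,S=\mathbb{Z}L$); non-splitness forces $a\geq 0$, and non-triviality of the composition $L^a\to\mathcal{E}\to\mathfrak{m}_x^l$ forces $a=0$, but then $\mathcal{O}_S\hookrightarrow \mathfrak{m}_x^l$ contradicts $H^0(\mathfrak{m}_x^l)=0$.

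The main obstacle is to actually produce that non-scalar endomorphism under the sharp hypothesis $4+2g>(l+1)^2$. The naive Lazarsfeld route via $\chi(\mathcal{E},\mathcal{E})=6+2g-2l(l+1)>2$ together with the Serre-duality identity ${\rm dim}\,{\rm Ext}^2(\mathcal{E},\mathcal{E})={\rm dim}\,{\rm Hom}(\mathcal{E},\mathcal{E})$ on a $K3$ requires the strictly stronger bound $g>l^2+l-2$. Bridging this gap should exploit that $V(\mathfrak{m}_x^l)$ is not a generic length-$l(l+1)/2$ subscheme but a fat point concentrated at a single point $x$, so that the failure locus of local-freeness of $\mathcal{E}$ is localized at $x$ and any endomorphism is constrained to respect the canonical filtration $\mathfrak{m}_x^l\subset \mathfrak{m}_x^{l-1}\subset\cdots\subset\mathcal{O}_S$; this extra rigidity is what I expect to provide the non-scalar endomorphism (equivalently, the destabilizing subsheaf) in the proposition's wider range.
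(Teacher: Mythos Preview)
Your Riemann--Roch computation, the $H^2$ vanishing, and the Leray reduction $H^1(\widetilde{S},H)\cong H^1(S,L\otimes\mathfrak{m}_x^l)$ are all fine. The gap you yourself flag is real, and your proposed fix (``extra rigidity'' of the fat point forcing a non-scalar endomorphism) is a hope, not an argument; nothing in what you wrote actually produces that endomorphism in the range $4+2g>(l+1)^2$ when $\chi(\mathcal{E},\mathcal{E})\leq 2$.

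The paper closes this gap not by squeezing more out of the fat-point structure on $S$, but by running the Lazarsfeld--Reider argument \emph{directly on $\widetilde{S}$}. There Serre duality reads $H^1(\widetilde{S},H)^*\cong{\rm Ext}^1(H,\mathcal{O}_{\widetilde{S}}(E))$, and a nonzero class gives, after twisting by $H^{-1}$, an extension
\[
0\longrightarrow \tau^*L^{-1}\big((l+1)E\big)\longrightarrow \mathcal{E}\longrightarrow \mathcal{O}_{\widetilde{S}}\longrightarrow 0
\]
of \emph{line bundles}. The point is that on the blow-up the fat point has become the divisor $lE$, so $c_2(\mathcal{E})=0$ and $c_1(\mathcal{E})^2=2g-2-(l+1)^2$; hence $\chi({\rm End}\,\mathcal{E})=6+2g-(l+1)^2>2$ is \emph{exactly} the hypothesis $4+2g>(l+1)^2$. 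By contrast, pushing down to $S$ converts the twist by $-lE$ into the ideal $\mathfrak{m}_x^l$, inflating $c_2$ to $l(l+1)/2$ and costing you precisely the factor that ruins the bound. Since $K_{\widetilde{S}}=E$, Serre duality now gives $h^0({\rm End}\,\mathcal{E})+h^0({\rm End}\,\mathcal{E}(E))\geq\chi({\rm End}\,\mathcal{E})>2$, so there is a $\phi:\mathcal{E}\to\mathcal{E}(E)$ not proportional to the identity; its trace and determinant lie in $H^0(\mathcal{O}_{\widetilde{S}}(E))=H^0(\mathcal{O}_{\widetilde{S}}(2E))=\mathbb{C}$, so one can still subtract a root and assume $\phi$ has generic rank $1$. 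The kernel/image analysis then proceeds on $\widetilde{S}$ using ${\rm Pic}\,\widetilde{S}=\mathbb{Z}\tau^*L\oplus\mathbb{Z}E$ and the condition $h^0(\mathcal{E})=0$, and $c_2(\mathcal{E})=0$ makes the contradiction immediate.

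In short: the missing idea is not a sharper endomorphism count for sheaves with support at a fat point, but the observation that blowing up trades $c_2$ for $c_1^2$ in exactly the way that makes the Lazarsfeld inequality sharp.
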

\begin{proof} We argue by contradiction. The proof follows
 Reider's \cite{reider} and Lazarsfeld's \cite{laz} methods.
Assume $H^1(\widetilde{S},H)\not=0$. Then, by Serre duality,
${\rm Ext}^1(H,\mathcal{O}_{\widetilde{S}}(E))\not=0$, which provides  a rank $2$ vector bundle
$\mathcal{E}$ on $\widetilde{S}$ which fits in an exact sequence
\begin{eqnarray}
\label{eqexpourdefE} 0\rightarrow \tau^*L^{-1}((l+1) E)\rightarrow \mathcal{E}\rightarrow \mathcal{O}_{\widetilde{S}}
\rightarrow 0.
\end{eqnarray}
The fact that the extension class of (\ref{eqexpourdefE})  is not trivial translates into
$h^0(\widetilde{S},\mathcal{E})=0$.
We have $c_2(\mathcal{E})=0$ and $c_1(\mathcal{E})^2=2g-2-(l+1)^2$, so that
(\ref{eqinegstilde}) gives the inequality
$$\chi({End}\,\mathcal{E})=8+c_1(\mathcal{E})^2-4c_2(\mathcal{E})>2.$$
It follows that $h^0(\widetilde{S},{End}\,\mathcal{E})+h^0(\widetilde{S},{End}\,\mathcal{E}(E))>2$,
hence $h^0(\widetilde{S},{End}\,\mathcal{E}(E))>1$. Thus there exists a $\phi\in {\rm Hom}\,(\mathcal{E},\mathcal{E}(E))$ which is not proportional to the identity. The characteristic polynomial of $\phi$ has its trace in $H^0(\widetilde{S},\mathcal{O}_{\widetilde{S}}(E))=H^0(\widetilde{S},\mathcal{O}_{\widetilde{S}})$ and determinant
in $H^0(\widetilde{S},\mathcal{O}_{\widetilde{S}}(2E))=H^0(\widetilde{S},\mathcal{O}_{\widetilde{S}})$.
It is thus a polynomial with coefficients in $\mathbb{C}$ and has a rood $\lambda$.
Replacing $\phi$ by $\phi-\lambda Id_\mathcal{E}$ (where we see $Id_\mathcal{E}$ as an element of
${\rm Hom}\,(\mathcal{E},\mathcal{E}(E))$),
we can in fact assume that $\phi$ is generically of rank $1$.
Let $A={\rm Ker}\,\phi\subset \mathcal{E}$. We have $A= \tau^*L^\alpha(\beta E)$ and
$\mathcal{E}$ fits in an exact sequence
\begin{eqnarray}\label{egexcatphiE}
0\rightarrow A\rightarrow \mathcal{E}\rightarrow B\otimes \mathcal{I}_W\rightarrow 0,
\end{eqnarray}
where $B$ is the line bundle $\tau^*L^{-1-\alpha}((l+1-\beta) E)$.
As $B={\rm Im}\,\phi$, we have $B\hookrightarrow\mathcal{E}(E)$.
From the exact sequence (\ref{eqexpourdefE}), we immediately conclude that
$\alpha\leq 0$ and $(-1-\alpha)\leq 0$, so that $\alpha=0$ or $\alpha=-1$.

Assume first $\alpha=0$. Then as $h^0(\widetilde{S},\mathcal{E})=0$, we conclude that
$\beta<0$, hence $l+1-\beta>0$. Then (\ref{egexcatphiE}) gives $$c_2(\mathcal{E})=A\cdot B+{\rm deg}\,W\geq -\beta(l+1-\beta)>0,$$
which is a contradiction.

In the remaining case $\alpha=-1$, we conclude that $B=\mathcal{O}_{\widetilde{S}}((l+1-\beta) E)$, so that we have
a nonzero morphism $\mathcal{O}_{\widetilde{S}}((l-\beta)E)\rightarrow \mathcal{E}$. This provides
a line bundle $A'\subset \mathcal{E}$ defined as the saturation of the image
 of this morphims, and we know that $A' =\tau^*L^{\alpha'}(\beta' E)$ with $\alpha'\geq 0$.
 We can then apply the previous argument with $A$ replaced by $A'$, getting a contradiction.
\end{proof}
Pushing forward the arguments above, we   now prove the following result:
\begin{theo}\label{theoexconj} Let $S$ be a general $K3$ surface with Picard group generated by $L$, and $x\in S$ a general point. Then for $k\geq 2$, $H=\tau^*L(-lE)$  is $k$-ample for
$k=l$  or $k=l+1$, and  $g-\frac{l(l+1)}{2}=3k-2$.
\end{theo}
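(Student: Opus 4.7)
The strategy is to extend the Reider--Lazarsfeld arguments of Lemma~\ref{lelaz} and Proposition~\ref{propourEsansZ} to the Hilbert scheme setting, arguing by contradiction. If $H$ is not $k$-ample, there is a length-$k$ subscheme $Z\subset\widetilde{S}$ for which $H^0(\widetilde{S},H)\to H^0(H_{|Z})$ is not surjective, so $H^1(\widetilde{S},\mathcal{I}_Z\otimes H)\neq 0$. Serre duality on $\widetilde{S}$ (with $K_{\widetilde{S}}=\mathcal{O}(E)$) then produces a non-split extension
$$0\to H^{-1}(E)\to \mathcal{E}\to \mathcal{I}_Z\to 0$$
with $\mathcal{E}$ a torsion-free rank-two sheaf. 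Since $H^0(\widetilde{S},H^{-1}(E))=H^0(S,L^{-1})=0$ and $H^0(\mathcal{I}_Z)=0$, one has $H^0(\mathcal{E})=0$, which will play the role of the non-splitness invariant in what follows.

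The Chern classes are $c_1(\mathcal{E})=-\tau^*L+(l+1)E$, so $c_1(\mathcal{E})^2=2g-2-(l+1)^2$ and $c_2(\mathcal{E})=k$. Riemann--Roch on $\widetilde{S}$ (where $\chi(\mathcal{O})=2$) gives
$$\chi(\mathcal{E},\mathcal{E})=8+c_1(\mathcal{E})^2-4c_2(\mathcal{E})=6+2g-(l+1)^2-4k,$$
which under the hypothesis $g-l(l+1)/2=3k-2$ simplifies to $\chi(\mathcal{E},\mathcal{E})=2k-l+1$. This is $\geq 3$ for $k=l\geq 2$ and $\geq 4$ for $k=l+1\geq 2$. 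By Serre duality $\mathrm{Ext}^2(\mathcal{E},\mathcal{E})\simeq \mathrm{Hom}(\mathcal{E},\mathcal{E}(E))^\vee$, hence $h^0(\mathrm{End}\,\mathcal{E})+h^0(\mathrm{End}\,\mathcal{E}(E))>2$, and $\mathrm{Hom}(\mathcal{E},\mathcal{E}(E))$ contains an element $\phi$ not proportional to $\mathrm{Id}\otimes\sigma_E$, where $\sigma_E$ denotes the canonical section of $\mathcal{O}(E)$. Because $\mathrm{tr}(\phi)\in H^0(\mathcal{O}(E))=\mathbb{C}$ and $\det(\phi)\in H^0(\mathcal{O}(2E))=\mathbb{C}$, the characteristic polynomial of $\phi$ has a root $\lambda\in\mathbb{C}$; replacing $\phi$ by $\phi-\lambda\,\mathrm{Id}\otimes\sigma_E$, one may take $\phi$ generically of rank $1$.

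Let $A\subset\mathcal{E}$ be the saturation of $\ker\phi$ and $B$ the reflexive hull of $\mathrm{Im}\,\phi\subset\mathcal{E}(E)$. Using $\mathrm{Pic}\,\widetilde{S}=\mathbb{Z}\tau^*L\oplus\mathbb{Z}E$ (generality of $S$), write $A=\tau^*L^{\alpha}(\beta E)$, so that
$$0\to A\to \mathcal{E}\to B\otimes\mathcal{I}_W\to 0,\qquad B=\tau^*L^{-1-\alpha}((l+1-\beta)E),$$
with $W$ zero-dimensional. The analysis of Proposition~\ref{propourEsansZ}, combining compositions with the defining extension and effectivity on $\widetilde{S}$, forces $\alpha\in\{0,-1\}$. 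In the case $\alpha=0$, the vanishing $H^0(\mathcal{E})=0$ gives $\beta\leq -1$, whence
$$k=c_2(\mathcal{E})=A\cdot B+\deg W\geq -\beta(l+1-\beta)\geq l+2,$$
contradicting $k\in\{l,l+1\}$. In the case $\alpha=-1$, one has $B=\mathcal{O}((l+1-\beta)E)$, and the embedding $B\hookrightarrow \mathcal{E}(E)$ produces $\mathcal{O}((l-\beta)E)\hookrightarrow\mathcal{E}$; its saturation $A'=\tau^*L^{\alpha'}(\beta' E)$ has $\alpha'\geq 0$, and the same effectivity analysis forces $\alpha'\leq 0$, so $\alpha'=0$ and we are reduced to the previous contradiction.

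The main technical obstacle I foresee is pinning down the two possibilities $\alpha\in\{0,-1\}$ and correctly reducing the case $\alpha=-1$ to the case $\alpha=0$: this requires carefully exploiting the defining extension together with the vanishing $H^0(\mathcal{E})=0$ and the structure of the effective cone of $\widetilde{S}$, and it is here that the genericity hypotheses on $S$ and $x$ are used, through the control they give on which line bundles $\tau^*L^{\alpha'}(\beta' E)$ can occur as saturated sub-line-bundles of $\mathcal{E}$.
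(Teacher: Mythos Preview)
Your proposal is correct and follows essentially the same Reider--Lazarsfeld strategy as the paper: construct the rank-two torsion-free sheaf $\mathcal{E}$ from the failure of $k$-ampleness, use the Euler characteristic estimate to produce a non-scalar $\phi\in\mathrm{Hom}(\mathcal{E},\mathcal{E}(E))$, reduce to $\phi$ generically of rank one via the characteristic polynomial, and analyse the resulting destabilising sub-line-bundle $A=\tau^*L^{\alpha}(\beta E)$ with $\alpha\in\{0,-1\}$, reaching a contradiction with $c_2(\mathcal{E})=k\leq l+1$.

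The one noteworthy difference is in how you exclude $\beta=0$ in the case $\alpha=0$. The paper argues that $\beta=0$ would force $\mathcal{I}_W\subset\mathcal{I}_Z$ and that the extension class would then die under the restriction map ${\rm Ext}^1(\mathcal{I}_Z,\tau^*L^{-1}((l+1)E))\to{\rm Ext}^1(\mathcal{I}_W,\tau^*L^{-1}((l+1)E))$, whose injectivity is obtained by dualising and invoking Proposition~\ref{propourEsansZ}. Your route is more direct: you observe at the outset that $H^0(\mathcal{E})=0$ (from $H^0(\tau^*L^{-1}((l+1)E))=H^0(S,L^{-1})=0$ and $H^0(\mathcal{I}_Z)=0$), so any saturated sub-line-bundle $\mathcal{O}(\beta E)\subset\mathcal{E}$ must have $\beta\leq-1$. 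This is a genuine simplification, since it bypasses the Ext-restriction argument entirely; on the other hand, the paper's argument makes the role of Proposition~\ref{propourEsansZ} (and hence of the numerical hypothesis via Remark~\ref{rematardive}) explicit at this step.
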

\begin{rema}\label{rematardive}{ When $g-\frac{l(l+1)}{2}=3k-2$, with $k=l$  or $k=l+1$, one has for $l>0$
$$4+2g=l(l+1)+6k\geq (l+7)l>(l+1)^2$$
so that Proposition \ref{propourEsansZ} applies, which gives
$H^1(\widetilde{S},H)=0$ and  $h^0(\widetilde{S},H)=g+1-\frac{l(l+1)}{2}=3k-1$.
}
\end{rema}
\begin{proof}[Proof of Theorem \ref{theoexconj}]  With the assumptions of Theorem \ref{theoexconj}, assume
$H$ is not $k$-ample. Therefore there exists a $0$-dimensional subscheme
$Z\subset \widetilde{S}$ of length $k$ such that
$H^1(\widetilde{S},H\otimes\mathcal{I}_Z)\not=0$. Using
the duality $H^1(\widetilde{S},H\otimes\mathcal{I}_Z)^*={\rm Ext}^1(\mathcal{I}_Z,-H+E)$,
this provides us with
a rank $2$ torsion free sheaf $\mathcal{E}$ on $\widetilde{S}$ fitting in an exact sequence
\begin{eqnarray}\label{eqexdefEZ}0\rightarrow \tau^*L^{-1}((l+1)E)\rightarrow\mathcal{E}\rightarrow
\mathcal{I}_Z\rightarrow 0.
\end{eqnarray}
The numerical invariants of $\mathcal{E}$ are
given by
$$c_2(\mathcal{E})=k,\,c_1(\mathcal{E})^2=2g-2-(l+1)^2,$$
from which we conclude that
$$\chi(\mathcal{E},\mathcal{E})=8+2g-2-(l+1)^2-4k,$$
hence
\begin{eqnarray} \label{eqsuppourend} h^0(End\,\mathcal{E})+h^0(End\,\mathcal{E}(E))\geq 8+2g-2-(l+1)^2-4k.
\end{eqnarray}
By assumption,  $g-\frac{l(l+1)}{2}=3k-2$, so $2g-2-(l+1)^2=6k-6-(l+1)$ and (\ref{eqsuppourend}) gives
$$2h^0(End\,\mathcal{E}(E))\geq 2+2k-(l+1),$$
hence $2h^0(End\,\mathcal{E}(E))>2$ because $k\geq2$ and $k=l$ or $k=l+1$. Thus there exists a morphism
$$\phi:\mathcal{E}\rightarrow \mathcal{E}(E)$$
which is not proportional to the identity. As before,
we can even assume that $\phi$ is generically of rank $1$. One difference with the previous situation
is the fact that $\mathcal{E}$ is not necessarily locally free, and furthermore $c_2(\mathcal{E})\not=0$.
The kernel of $\phi$ and its image are torsion free of rank $1$,
hence are of the form
$A\otimes \mathcal{I}_W,\,B\otimes \mathcal{I}_{W'}$ for some line bundles
$A,\,B$
on $\widetilde{S}$ which are of the form
$$A=\tau^*L^\alpha(\beta E),\,\,B=\tau^*L^{-1-\alpha}((l+1-\beta)E).$$
As before, we must have $\alpha\leq 0$ and $-1-\alpha\leq 0$ because $B$ injects into
$\mathcal{E}(E)$.
Hence we conclude that $\alpha=0$ or $\alpha=-1$.

(i) If $\alpha=0$, then we have a nonzero morphism
$\mathcal{O}(\beta E)\otimes\mathcal{I}_W\rightarrow \mathcal{I}_Z$. It follows that
$\beta\leq 0$. If $\beta=0$,
this says that $\mathcal{I}_W\subset \mathcal{I}_Z$ and that the
extension class of (\ref{eqexdefEZ}) vanishes in
${\rm Ext}^1(\mathcal{I}_W,\tau^*L^{-1}((l+1)E))$. But the restriction map
$${\rm Ext}^1(\mathcal{I}_Z,\tau^*L^{-1}((l+1)E))\rightarrow {\rm Ext}^1(\mathcal{I}_W,\tau^*L^{-1}((l+1)E))$$
is injective as it is  dual to the map
$H^1(\widetilde{S},\mathcal{I}_W(H))\rightarrow H^1(\widetilde{S},\mathcal{I}_Z(H))$ which is surjective. Indeed, the spaces are respective quotients of $H^0(H_{\mid W})$, $H^0(H_{\mid Z})$ by
Proposition \ref{propourEsansZ} which applies in our case as noted in Remark
\ref{rematardive}. So we conclude that $\beta<0$. We now compute
$c_2(\mathcal{E})$ using the exact sequence
$$0\rightarrow A\otimes \mathcal{I}_W\rightarrow \mathcal{E}\rightarrow B\otimes \mathcal{I}_{W'}\rightarrow 0,$$
with $A=\mathcal{O}(\beta E)$, $B=\tau^*L^{-1}((l+1-\beta) E)$.
This gives $$c_2(\mathcal{E})={\rm deg}\,W+{\rm deg}\,W'-\beta(l+1-\beta)\geq -\beta(l+1-\beta)\geq l+2.$$
This contradicts $c_2(\mathcal{E})=k\leq l+1$.

(ii) If $\alpha=-1$, then we use instead the inclusion
$B\otimes \mathcal{I}_{W'}\subset \mathcal{E}(E)$, with $B=\mathcal{O}((l+1-\beta)E)$ and argue exactly as before.
\end{proof}
 We  deduce  the following Corollary \ref{coroannueclat} concerning the numbers
 $s_{k}(d,\pi,\kappa,e)$ (we adopt here Lehn's notation \cite{lehn})
defined as the top Segre class of $\mathcal{H}_{[k]}$ for a
pair $(\Sigma,H)$ where $\Sigma$ is a smooth compact surface, and
$$d=H^2,\,\pi=H\cdot c_1(K_\Sigma),\,\kappa=c_1(\Sigma)^2,\,e=c_2(\Sigma).$$

\begin{coro}\label{coroannueclat} (Cf. Theorem \ref{theoannuintro}.)
One has the following vanishing for $s_{k}(d,\pi,-1,25)$
\begin{eqnarray}\label{eqvanconj} s_{k}(7(k-1),k-1,-1,25)=0,\,\,s_{k}(7(k-1)+1,k,-1,25)=0
\end{eqnarray}
for $k\geq 2$.
\end{coro}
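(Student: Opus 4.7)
The plan is to deduce Corollary \ref{coroannueclat} directly from Theorem \ref{theoannuintro}, by reading off the Chern numbers of the pair $(\widetilde{S}, H)$ and invoking the universality of Segre numbers provided by Theorem \ref{theoegl}.

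I would first compute the four numerical invariants of $(\widetilde{S}, H)$, where $\tau : \widetilde{S} \to S$ is the blow-up of a $K3$ surface at one point with exceptional curve $E$, and $H = \tau^*L(-lE)$ with $L^2 = 2g-2$. Using $K_{\widetilde{S}} = E$, $E^2 = -1$, $\tau^*L \cdot E = 0$, and $c_2(\widetilde{S}) = c_2(S) + 1 = 25$, a short calculation gives
\[
d = H^2 = 2g - 2 - l^2,\quad \pi = H \cdot K_{\widetilde{S}} = l,\quad \kappa = K_{\widetilde{S}}^2 = -1,\quad e = 25.
\]
Theorem \ref{theoegl} then identifies $\tilde{s}_{k,g,l}$ with the universal Segre number $s_k(2g-2-l^2,\, l,\, -1,\, 25)$.

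Next I would substitute the two vanishing regimes supplied by Theorem \ref{theoannuintro}, namely $g = 3k - 2 + l(l+1)/2$ with either $k = l+1$ or $k = l$. In the first case $l = k-1$ gives $2g = k^2 + 5k - 4$, whence $(d, \pi) = (7(k-1),\, k-1)$, producing the first vanishing of (\ref{eqvanconj}). In the second case $l = k$ gives $2g = k^2 + 7k - 4$, whence $(d, \pi) = (7(k-1)+1,\, k)$, producing the second vanishing.

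This approach should encounter no essential obstacle: all the geometric content is already carried by Theorem \ref{theoannuintro}, and the corollary is purely a matter of repackaging that vanishing in Lehn's coordinates $(d, \pi, \kappa, e)$. The one point requiring attention is the arithmetic verification that, under the constraint $g - l(l+1)/2 = 3k - 2$, the parameter pair $(d, \pi)$ falls precisely on the two lines $(7(k-1),\, k-1)$ and $(7(k-1)+1,\, k)$ listed in the corollary; this is a direct computation with no hidden subtlety.
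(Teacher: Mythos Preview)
Your proposal is correct and follows essentially the same route as the paper: compute the invariants $(d,\pi,\kappa,e)=(2g-2-l^2,\,l,\,-1,\,25)$ for the pair $(\widetilde{S},H)$, then substitute $l=k-1$ and $l=k$ under the constraint $g-\tfrac{l(l+1)}{2}=3k-2$ to obtain the two stated pairs $(d,\pi)$. The only cosmetic difference is that the paper invokes Theorem~\ref{theoexconj} together with Remark~\ref{rematardive} and Lemma~\ref{lefact} rather than citing Theorem~\ref{theoannuintro} directly, but these are the very ingredients of Theorem~\ref{theoannuintro}, so the content is identical.
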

\begin{proof} Take for  $\Sigma$  the blow-up of a $K3$ surface  at a point so
$$\kappa=-1,\,e=25.$$
Furthermore, assuming ${\rm Pic}\,S=\mathbb{Z}L$ with
$L^2=2g-2$, and letting $H=\tau^*L(-lE)$ as above, we have
\begin{eqnarray} d=H^2=2g-2-l^2,\,\pi=H\cdot c_1(K_\Sigma)=l.
\label{eqnombres1}
\end{eqnarray}
 We  consider the cases where
 \begin{eqnarray}g-\frac{l(l+1)}{2}=3k-2\label{eqnombres22}
\end{eqnarray}
 with (i) $k=l+1$ or (ii) $k=l$.

  Using (\ref{eqnombres1}), (\ref{eqnombres22}) gives
 in  case (i), $d=7(k-1),\,\pi=k-1$ and in case (ii), $d=7(k-1)+1,\pi=k$, so we are exactly computing $s_{k}(7(k-1),k-1,-1,25)=0$ in case (i) and $s_{k}(7(k-1)+1,k,-1,25)$ in case (ii).
   Remark \ref{rematardive} says that assuming  (\ref{eqnombres22}),
 $$H^1(\widetilde{S},H)=0,\,h^0(\widetilde{S},H)=3k-1$$
 in cases (i) and (ii).
 Theorem \ref{theoexconj} says that under the same assumption, $H$ is $k$-ample on $\widetilde{S}$.
  Lemma \ref{lefact} thus applies and gives $s_{2k}(\mathcal{H}_{[k]})=0$ in both cases, which is exactly (\ref{eqvanconj}).
 \end{proof}
 \begin{rema}{\rm Lehn gives in \cite[Section 4]{lehn} the explicit polynomial formulas for
 $2! s_2,\ldots,\,5!s_5$ as polynomial functions of $d,\,\pi,\,\kappa,\,e$ with huge integral coefficients.
 For example
 \begin{eqnarray}
 \label{eqlehn5}
  5! · s_5 = d^5 - 100d^4 + d^3(3740 + 10e - 50\pi - 10\kappa)\\
  \nonumber
-d^2(62000 - 3420\pi + 700e - 860\kappa) + d(384384 + 15e^2 \\
\nonumber
+15960e - 30e\kappa - 150\pi e + 15\kappa^2 + 150\kappa\pi - 75610\pi
-24340\kappa + 375\pi^2)\\
\nonumber
 - 400e^2 - 117120e + 3920\pi e + 960\kappa e
+226560\kappa - 4720\kappa\pi
\\
\nonumber- 560\kappa^2 + 530880\pi - 9600\pi^2
 \end{eqnarray}
 It is  pleasant to check the vanishing statements  (\ref{eqvanconj}) for $k=2,\ldots,\,5$ using these formulas. For $k=5$, one just has to plug-in the values
 $e=25$, $\kappa=-1$, $d=28$  and $ \pi=4$, or $e=25$, $\kappa=-1$, $d=29$  and $\pi=5$
 in (\ref{eqlehn5}).
 }
 \end{rema}
 We conclude this note by showing that all the Segre numbers are formally determined
 by the above  results and formula (\ref{eqsumprod}).
\begin{prop} \label{prodeterm} The vanishings (\ref{eqvanconj}) together with the data of the numbers
$ s_{k}(d,0,0,24)$   and $ s_{k}(d,0,0,0)$  determine all numbers $s_{k}(d,\pi,\kappa,e)$.
\end{prop}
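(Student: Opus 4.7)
The plan is to use the multiplicativity of Lemma~\ref{theosumprod} to reduce the problem to determining four universal power series, and then to pin these down using the given data together with the vanishings~(\ref{eqvanconj}). Under disjoint union the invariants $(d,\pi,\kappa,e)$ add while the Segre series multiply; combined with the polynomial dependence of $s_k$ on $(d,\pi,\kappa,e)$ established in \cite{egl}, this forces the existence of universal power series $A, B, C, D \in z\mathbb{Q}[[z]]$ such that
\[
s_{d,\pi,\kappa,e}(z) = \exp\bigl(d\,A(z) + \pi\,B(z) + \kappa\,C(z) + e\,D(z)\bigr).
\]

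The given data determines $A$ and $D$ immediately: $s_k(d,0,0,0)$ viewed as a function of $d$ gives $d\,A(z)$, hence $A$; and $s_k(d,0,0,24)$ then gives $d\,A(z)+24\,D(z)$, hence $D$.

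It remains to recover $B$ and $C$ from the vanishings. Set
\[
X(z) := \exp\bigl(-C(z) + 25\,D(z)\bigr),\qquad Y(z) := \exp\bigl(7\,A(z) + B(z)\bigr),
\]
so that, $A$ and $D$ being already known, $X$ determines $C$ and $Y$ determines $B$. The two families in (\ref{eqvanconj}) translate into
\[
[z^k]\, X\, Y^{k-1} = 0 \quad\text{and}\quad [z^k]\,\bigl(e^{-6A(z)} X(z)\bigr)\, Y(z)^k = 0,\qquad k \geq 2,
\]
where the factor $e^{-6A(z)}$ is already known.

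Write $X = 1 + \sum_{n\geq 1} x_n z^n$ and $Y = 1 + \sum_{n\geq 1} y_n z^n$. The base values $x_1 = 0$ and $y_1 = 7$ come from the elementary identity $s_1(d,\pi,\kappa,e)=d$ (since $\mathcal{H}_{[1]}=H$ on $\Sigma^{[1]}=\Sigma$, so its top Segre class is $c_1(H)^2=d$). For $n \geq 2$, the two equations above at $k=n$ involve only $x_i, y_i$ for $i \leq n$, and their linear part in $(x_n, y_n)$ is governed by the matrix
\[
\begin{pmatrix} 1 & n-1 \\ 1 & n \end{pmatrix},
\]
of determinant $1$. Thus by induction on $n$ the pair $(x_n, y_n)$ is uniquely determined at every level, so $B$ and $C$ are determined, and with them the whole series $s_{d,\pi,\kappa,e}$. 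The main obstacle is really the first step, namely writing $s_{d,\pi,\kappa,e}(z)$ in the exponential form above: this requires the polynomiality of $s_k$ in the four invariants from \cite{egl} together with the multiplicativity of Lemma~\ref{theosumprod}. Once that is granted, the rest is straightforward formal power-series bookkeeping hinging only on the invertibility of the above $2\times 2$ system at every level $n\geq 2$.
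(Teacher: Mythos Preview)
Your proof is correct and follows essentially the same route as the paper's: both invoke the multiplicative structure (equivalently, your exponential form) from \cite{lehn}, \cite{egl} to reduce to four universal series, determine two of them from the $K$-trivial data $s_k(d,0,0,0)$ and $s_k(d,0,0,24)$, and then recover the remaining two inductively from the vanishings (\ref{eqvanconj}) via an invertible $2\times2$ linear system at each degree. The only differences are cosmetic---the paper writes $s(z)=A(z)^dB(z)^eC(z)^\pi D(z)^\kappa$ and works directly with the degree-$k$ coefficients $C_k,D_k$, obtaining the system $(k-1)C_k-D_k+\nu=0$, $kC_k-D_k+\nu'=0$, whereas you repackage the unknowns into $X,Y$; both systems have determinant $1$.
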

 Note that $ s_{k}(d,0,0,24)$ is for $d=2g-2$ the number $s_{k,g}$ of the introduction, and these numbers  are given by Marian-Oprea-Pandharipande's Theorem \ref{theopandha}. The numbers $ s_{k}(d,0,0,0)$ correspond for $d$ even to the Segre classes of tautological sheaves on  Hilbert schemes of abelian surfaces
 equipped with a line bundle of self-intersection $d$. They are fully determined, by multiplicativity,
 by the case of self-intersection $2$, where one gets the numbers $b'_k$ appearing in our  proof of Theorem
 \ref{theopandha}.
\begin{proof}[Proof of Proposition \ref{prodeterm}] According to
\cite{lehn}, \cite{egl}, and as follows from
(\ref{eqsumprod}), the generating  series
$$s(z)=\sum_k s_k(d,\pi,\kappa,e) z^k$$
is of the form \begin{eqnarray}\label{eqformmults} s(z)=A(z)^d B(z)^e C(z)^\pi D(z)^\kappa,
 \end{eqnarray}for power series
$A,\,B,\,C,\,D$ with $0$-th order coefficient equal to  $1$.
 Theorem \ref{theopandha} determines the series
$A(z)$ and $B(z)$. We thus only have to determine $C(z)$ and $D(z)$. The degree $1$ coefficients
of the power series
$C(z),\,D(z)$ are immediate to compute as $s_1=d$. We now assume that the coefficients of the power series
$C(z)$ and $D(z)$ are computed up to degree $k-1$.
The degree $k$ coefficient of
$s(z)=A(z)^d B(z)^e C(z)^\pi D(z)^\kappa$ is of the form
$\pi C_k+\kappa D_k+ \nu$ where $\nu$ is determined by $d,\,e,\,\pi,\,\kappa$, the coefficients of $A$ and $B$, and the
coefficients of order $\leq k-1$ of $C$ and $D$.
The vanishings
(\ref{eqvanconj}) thus give the equations
$$ 0=(k-1)C_k-D_k+\nu,\,\,0=k C_k-D_k+\nu',$$
which obviously determines $C_k$ and $D_k$ as functions of $\nu$ and $\nu'$.
\end{proof}
We finally prove Corollary \ref{coropourlehnintro} of the introduction.
\begin{proof}[Proof of Corollary \ref{coropourlehnintro}] Let $f_{d,\pi,\kappa,e}(z)$ be the Lehn function introduced in Conjecture \ref{conjlehn}.
As Lehn's conjecture is proved by \cite{MOP} for $\pi=\kappa=0$ (the $K$-trivial case), the
coefficients $f_{k,d,\pi,\kappa,e}$ of the Taylor expansion of $f_{d,\pi,\kappa,e}$ in $z$ (not $w$) are the Segre numbers
$s_{k,d,0,0,e}$ when $\pi=0,\,\kappa=0$. If furthermore they satisfy the vanishings
$f_{k,d,\pi,\kappa,e}=0$ for $e=25,\,\kappa=-1$ and $d=7(k-1),\,\pi=k-1$ or $d=7(k-1)+1,\,\pi=k$, the proof of
Proposition \ref{prodeterm} shows that $f_{k,d,\pi,\kappa,e}=s_{k,d,\pi,\kappa,e}$ for
all $k,\,d,\,\pi,\,\kappa,\,e$ as, by definition, $f$ has the same multiplicative form (\ref{eqformmults}) as $s$.
\end{proof}

Coll\`{e}ge de France, 3 rue d'Ulm 75005 Paris

claire.voisin@imj-prg.fr
    \end{document}